\definecolor{dblue}{rgb}{0,0,0.70}
\newtheorem{theorem}{Theorem}[section]	
\newtheorem*{theorem*}{Theorem}
\newaliascnt{lemma}{theorem}
\newtheorem{lemma}[lemma]{Lemma}
\newaliascnt{proposition}{theorem}
\newtheorem{proposition}[proposition]{Proposition}
\newaliascnt{corollary}{theorem}
\newtheorem{corollary}[corollary]{Corollary}
\newaliascnt{fact}{theorem}
\newtheorem{fact}[fact]{Fact}
\theoremstyle{remark}
\newtheorem*{remark}{Remark}
\theoremstyle{definition}
\newtheorem{definition}[theorem]{Definition}
\newaliascnt{example}{theorem}
\renewcommand{\restriction}{\mathbin\upharpoonright}
\newcommand{\axiom}[1]{\mathsf{#1}} 
\newcommand{\ZFC}{\axiom{ZFC}}
\newcommand{\AC}{\axiom{AC}}
\newcommand{\DC}{\axiom{DC}}
\newcommand{\ZF}{\axiom{ZF}}
\newcommand{\Ord}{\mathrm{Ord}}
\newcommand{\GCH}{\axiom{GCH}}
\newcommand{\HS}{\axiom{HS}}
\DeclareMathOperator{\dom}{dom}
\DeclareMathOperator{\rng}{rng}
\DeclareMathOperator{\sym}{sym}
\DeclareMathOperator{\fix}{fix}
\DeclareMathOperator{\id}{id}
\DeclareMathOperator{\aut}{Aut}
\DeclareMathOperator{\club}{Club}
\newcommand{\forces}{\mathrel{\Vdash}}
\newcommand{\PP}{\mathbb{P}}
\newcommand{\power}{\mathcal{P}}
\newcommand{\QQ}{\mathbb{Q}}
\newcommand{\RR}{\mathbb{R}}
\newcommand{\cF}{\mathcal F}
\newcommand{\sF}{\mathscr F}
\newcommand{\cG}{\mathcal G}
\newcommand{\sG}{\mathscr G}
\newcommand{\tup}[1]{\langle#1\rangle}
\author{Asaf Karagila}
\thanks{This paper is part of the author's Ph.D.\ written at the Hebrew University of Jerusalem under the supervision of Prof.\ Menachem Magidor.}
\address{Einstein Institute of Mathematics.
Edmond J. Safra Campus, Givat Ram,
The Hebrew University of Jerusalem. 
Jerusalem, 91904, Israel}
\email{karagila@math.huji.ac.il}
\urladdr{http://boolesrings.org/asafk}
\date{\today}
\subjclass[2010]{Primary 03E40; Secondary 03E05, 03E25, 03E35}
\keywords{symmetric extensions, Fodor's lemma, closed and unbounded filter, iterated symmetric extensions, the axiom of choice}
\title{Fodor's lemma can fail everywhere}
\begin{document}
\begin{abstract}We show that it is equiconsistent with $\ZF$ that Fodor's lemma fails everywhere, and furthermore that the club filter on every regular cardinal is not even $\sigma$-complete. Moreover, these failures can be controlled in a very precise manner.
\end{abstract}
\maketitle    
\section{Introduction}
Closed and unbounded sets (clubs) and stationary sets are central to modern set theory. They give us a good notion of how ubiquitous is some property, or how large is a set. One lemma which is more central than others is Fodor's lemma, which to some extent can be seen as a generalization of the pigeonhole principle, which asserts that if $\kappa$ is a regular uncountable cardinal and $f\colon\kappa\to\kappa$ satisfies $f(\alpha)<\alpha$ for all $\alpha>0$, then $f$ is constant on a set of size $\kappa$ (which is also stationary). The fact that the proof of Fodor's lemma requires us to use the axiom of choice is not the least surprising, but it is curious nonetheless. This is an unwritten corollary in \cite{Bilinsky-Gitik:2012} where Eilon Bilinsky and Moti Gitik construct a model where a measurable cardinal has no normal measures. Here we take a different approach to the problem, and get that an arbitrary regular cardinal can be without normal filters, as well as global generalizations of this.

In this work we investigate some abstract properties related to Fodor's lemma and show that they can fail without choice, and they can hold with even the axiom of countable choice failing. Starting with any model $V$ of $\ZFC+\GCH$, we construct a model of $\ZF$ with the same cofinalities as $V$, in which Fodor's lemma fails for any prescribed class of regular cardinals. Moreover, in this extension we can also control the completeness of the club filters of every regular cardinal. It is worth pointing out explicitly that these assumptions mean that every successor cardinal is regular. This is worth noting, as any universal statement about Fodor's lemma and club filters will vacuously hold true in Gitik's infamous model, constructed in \cite{Gitik:1980}, where all limit ordinals have countable cofinality. So our proofs have substance to them, and they do not require any large cardinals.

\subsection{The structure of this paper}
Section~\ref{sect:prelim} covers the basics of symmetric extensions, which is the main tool used to create models where the axiom of choice fails; section~\ref{sect:fodor} includes some proofs in $\ZF$ about Fodor's lemma and the completeness of the club filter; and in sections \ref{sect:local} and \ref{sect:global} we construct the models, first a localized failure at a single regular cardinal and then two methods are outlined to construct models of global failure.

\section{Some preliminaries}\label{sect:prelim}
In this work a notion of forcing is a preordered set with a maximum element denoted by $1$. If $\PP$ is a notion of forcing and $p,q\in\PP$, we say that $q$ is \textit{stronger} than $p$ if $q\leq p$, alternatively we might say that $q$ extends $p$. If $\PP$ is a forcing, we will always use $\dot x$ to denote a $\PP$-name, and $\check x$ to denote the canonical name for $x$ in the ground model. We follow the convention that $\kappa$-distributive and $\kappa$-closed refer to relevant sequences \textit{shorter} than $\kappa$ having lower bounds.

Given an ordinal $\lambda$ we denote by $\AC_\lambda$ the statement that whenever $\{A_\alpha\mid\alpha<\lambda\}$ is a family of non-empty sets, there is a function such that $f(\alpha)\in A_\alpha$ for all $\alpha<\lambda$. We shall denote by $\DC_\lambda$ the statement that whenever $T$ is a tree of height $\leq\lambda$ which is $\lambda$-closed and has no maximal nodes, then there is a cofinal branch in $T$. In both cases we write $\AC_{<\lambda}$ and $\DC_{<\lambda}$ to abbreviate $(\forall\kappa<\lambda)\AC_\kappa$ and $(\forall\kappa<\lambda)\DC_\kappa$ respectively. More on the connection between the two statements can be found in \cite[Th.~8.1]{Jech:AC1973}.

Symmetric extensions are inner models of generic extensions where the axiom of choice can fail. Let us review the basic definitions related to symmetric extensions. Suppose that $\PP$ is a forcing notion and $\pi$ is an automorphism of $\PP$, then $\pi$ extends to a permutation of the $\PP$-names by recursion on the rank of $\dot x$:\[\pi\dot x=\{\tup{\pi p,\pi\dot y}\mid \tup{p,\dot y}\in\dot x\}.\]
Suppose that $\sG\leq\aut(\PP)$ is an automorphism group of $\PP$, we denote by $\sym_\sG(\dot x)$ the group $\{\pi\in\sG\mid\pi\dot x=\dot x\}$. We say that \textit{$\sG$ witnesses the homogeneity of $\PP$} if whenever $p,q\in\PP$ there is some $\pi\in\sG$ such that $\pi p$ is compatible with $q$. $\PP$ is \textit{weakly homogeneous} when $\aut(\PP)$ witnesses the homogeneity of $\PP$.

We say that $\sF$ is a \textit{filter of subgroups} over $\sG$ if it is a filter with respect to the lattice of subgroups of $\sG$. Namely, we take a filter (in the usual sense) of subsets of $\sG$ and consider the groups generated by each set in the filter. We say that $\sF$ is \textit{normal} if it is closed under conjugation, so if $H\in\sF$ and $\pi\in\sG$, then $\pi H\pi^{-1}\in\sF$ as well.\footnote{We are aware of the unfortunate overlap of the term ``normal filter''. The terminology, however, is standard in the context of symmetric extensions, as evident in \cite{Jech:AC1973}. It will always be clear when we refer to this notion of normality.}  A \textit{symmetric system} is $\tup{\PP,\sG,\sF}$ such that $\sG$ is a group of automorphisms of $\PP$ and $\sF$ is a normal filter of subgroups over $\sG$. 

We say that a $\PP$-name $\dot x$ is $\sF$-symmetric if $\sym_\sG(\dot x)\in\sF$; and we say that $\dot x$ is hereditarily $\sF$-symmetric if in addition to being $\sF$-symmetric, every $\dot y$ which appears in $\dot x$ is already hereditarily $\sF$-symmetric. We will denote by $\HS_\sF$ the class of all hereditarily $\sF$-symmetric names. And from this point on we shall omit $\sG$ and $\sF$ when they are clear from context.

\begin{lemma}[The Symmetry Lemma]
Suppose that $\pi\in\aut(\PP)$ then for every $p,\dot x$ and $\varphi$,
\[p\forces\varphi(\dot x)\iff\pi p\forces\varphi(\pi\dot x).\qed\]
\end{lemma}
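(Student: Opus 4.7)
The statement is the classical Symmetry Lemma for forcing, so the natural plan is the standard induction on the complexity of $\varphi$, reducing ultimately to an induction on the ranks of the names involved. The key observation driving everything is that $\pi\in\aut(\PP)$ means $\pi$ is an order automorphism sending the maximum to itself and preserving incompatibility; moreover $\pi$ lifts to a bijection of the $\PP$-names (its inverse being the lift of $\pi^{-1}$), so both $p\mapsto\pi p$ and $\dot x\mapsto\pi\dot x$ are bijections and commute with the recursive definition of the name operation.

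The first step is to handle the two atomic cases $\dot x\in\dot y$ and $\dot x=\dot y$ simultaneously, by induction on $\rank(\dot x)+\rank(\dot y)$ in the Kunen-style recursive definition of the forcing relation. For $p\forces\dot x\in\dot y$, unpack the definition: it says that the set of $q\le p$ for which there exists $\tup{r,\dot z}\in\dot y$ with $q\le r$ and $q\forces\dot x=\dot z$ is dense below $p$. Applying $\pi$ everywhere transforms this condition term by term into the corresponding condition below $\pi p$ with $\pi\dot x$ and $\pi\dot y$, using the induction hypothesis for the $\dot x=\dot z$ part and the definition $\pi\dot y=\{\tup{\pi r,\pi\dot z}\mid\tup{r,\dot z}\in\dot y\}$ for the membership part; the fact that $\pi$ is an order automorphism makes the density translate to density. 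The $\dot x=\dot y$ clause is entirely analogous, using the membership case at the previous rank.

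Once atomic is done, the inductive step on $\varphi$ is essentially formal. Conjunction is immediate since $p\forces\varphi\wedge\psi$ iff $p\forces\varphi$ and $p\forces\psi$. For negation, $p\forces\neg\varphi(\dot x)$ iff no $q\le p$ forces $\varphi(\dot x)$; since $q\mapsto\pi q$ is an order automorphism, this translates exactly to no $q'\le\pi p$ forcing $\varphi(\pi\dot x)$, i.e.\ $\pi p\forces\neg\varphi(\pi\dot x)$. For the existential quantifier $\exists y\,\varphi(y,\dot x)$, one uses that $\dot y\mapsto\pi\dot y$ is a bijection on the class of $\PP$-names, so the witnesses on the two sides are in correspondence.

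The main obstacle, and the only step requiring real care, is the atomic case: the forcing relation for $\in$ and $=$ is defined by a simultaneous recursion, and one must be attentive to what ``rank'' is doing in the induction and verify that $\pi$ really commutes with every clause (maximality, density, the pairing $\tup{r,\dot z}\in\dot y$). Everything else is bookkeeping once one accepts that $\pi$ is simultaneously an automorphism of $\PP$ and, by its recursive extension, an automorphism of the name structure.
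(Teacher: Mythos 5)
Your proof is correct and is the canonical argument: the paper itself omits the proof (the statement carries a \qed and the surrounding text defers to Jech's \emph{The Axiom of Choice} for details), and the standard proof found there is exactly your induction on the complexity of $\varphi$, grounded in a rank induction for the atomic clauses using that $\pi$ is an order automorphism commuting with the recursive lift to names. Nothing further is needed.
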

\begin{theorem}
Suppose that $\tup{\PP,\sG,\sF}$ is a symmetric system and $G$ is a $V$-generic filter for $\PP$. Then $M=\HS^G=\{\dot x^G\mid\dot x\in\HS\}$ is a transitive class of $V[G]$ satisfying $\ZF$ and $V\subseteq M$.\qed
\end{theorem}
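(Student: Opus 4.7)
The plan is to verify, in turn, that $M$ is transitive, that $V \subseteq M$, and that each $\ZF$ axiom holds in $M$, flagging Separation (and therefore Replacement) as the step where the symmetric structure really does work. \emph{Transitivity} is immediate from the recursive definition of $\HS$: if $\dot{x} \in \HS$ and $y \in \dot{x}^G$, then $y = \dot{z}^G$ for some $\tup{p,\dot{z}} \in \dot{x}$ with $p \in G$, and by the hereditary clause $\dot{z} \in \HS$. For \emph{$V \subseteq M$}, I would check by induction on rank that every canonical name $\check{x}$ is fixed pointwise by $\aut(\PP)$ (using $\pi 1 = 1$ and the inductive hypothesis on the $\check{y}$ with $y \in x$); hence $\sym(\check{x}) = \sG \in \sF$, and by the same induction $\check{x} \in \HS$, giving $x = \check{x}^G \in M$.

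Extensionality and Foundation are inherited from $V[G]$ since $M$ is transitive, and Infinity follows from $\omega \in V \subseteq M$. Pairing and Union are handled by direct name constructions whose symmetry groups contain finite intersections of elements of $\sF$; for instance, the pair of $\dot{x},\dot{y}$ is given by $\{\tup{1,\dot{x}},\tup{1,\dot{y}}\}$, whose symmetry group contains $\sym(\dot{x}) \cap \sym(\dot{y}) \in \sF$. For Power Set, given $\dot{A} \in \HS$, one takes the name $\dot{P} = \{\tup{1,\dot{y}} : \dot{y} \in \HS,\ \dom(\dot{y}) \subseteq \dom(\dot{A}),\ \rank(\dot{y}) < \alpha\}$ for a suitably large $\alpha$; every $B \in \power(\dot{A}^G)^M$ has a name of this form, and symmetry of $\dot{P}$ reduces to symmetry of $\dot{A}$ together with automorphism-invariance of the class $\HS$, which in turn uses normality of $\sF$ via $\sym(\pi\dot{y}) = \pi\sym(\dot{y})\pi^{-1}$.

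The substantive point is Separation; Replacement then follows by combining Separation with Replacement in $V[G]$ and a collection-style argument bounding the ranks of witnessing names. Given $\dot{A},\dot{b}_1,\ldots,\dot{b}_n \in \HS$ and a formula $\varphi$, the candidate name is
\[
\dot{B} = \{\tup{p,\dot{z}} : \dot{z} \in \dom(\dot{A}),\ p \forces \dot{z} \in \dot{A} \wedge \varphi^{\HS}(\dot{z},\dot{b}_1,\ldots,\dot{b}_n)\},
\]
which is well-defined in $V$ because the relativized forcing relation $\forces^{\HS}$ is definable from the symmetric system. The main obstacle is showing $\dot{B} \in \HS$: set $H = \sym(\dot{A}) \cap \bigcap_i \sym(\dot{b}_i)$, which lies in $\sF$ by the filter property. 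For $\pi \in H$ the Symmetry Lemma, combined with $\pi\dot{A} = \dot{A}$ and $\pi\dot{b}_i = \dot{b}_i$ and automorphism-invariance of $\HS$, yields $\pi p \forces \pi\dot{z} \in \dot{A} \wedge \varphi^{\HS}(\pi\dot{z},\dot{b}_1,\ldots,\dot{b}_n)$ with $\pi\dot{z} \in \dom(\dot{A})$; hence $\pi$ bijects $\dot{B}$ with itself, so $\pi\dot{B} = \dot{B}$ and $H \subseteq \sym(\dot{B})$. Hereditary symmetry of the second coordinates is inherited from $\dot{A}$, completing the verification.
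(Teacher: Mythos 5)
The paper does not prove this theorem itself; it cites Jech's \emph{The Axiom of Choice}, and your sketch is precisely the standard argument given there: transitivity and $V\subseteq M$ from the hereditary clause and invariance of $\check{x}$, the easy axioms from transitivity, Power Set via names supported on $\dom(\dot A)$, and Separation via the relativized forcing relation together with normality of $\sF$ (which gives automorphism-invariance of $\HS$). Your proposal is correct and takes essentially the same approach as the paper's cited source.
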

The model $M$ in the above theorem is called a \textit{symmetric extension} of $V$. The proof of the above statements, as well a much more detailed discussion of symmetric extensions can be found in \cite{Jech:AC1973}. We will also take products of symmetric systems, by this we mean that we take some type of product of the forcings, and using the same support we take products of the automorphism groups, and these act on the product forcing pointwise.

We will also need the following general fact, whose proof is due to Yair Hayut.
\begin{lemma}\label{lemma:yair product}
Let $\kappa$ be a regular cardinal. Suppose that $\PP$ has $\kappa$-c.c.\ and $\QQ$ is a $\kappa$-distributive forcing such that $\forces_\QQ``\check\PP\text{ has }\check\kappa\text{-c.c.}"$. Then $\forces_\PP``\check\QQ\text{ is }\kappa\text{-distributive}"$.
\end{lemma}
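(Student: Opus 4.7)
The plan is to use the factorization $\PP\times\QQ\cong\QQ\times\PP$ to swap the order of forcing and to extract, already in $V$, a $\PP$-name for any short function added over $V[G]$ by $\QQ$. Fix a $V$-generic $G\subseteq\PP$ and a $V[G]$-generic $H\subseteq\QQ$, so $(G,H)$ is $\PP\times\QQ$-generic over $V$. It suffices to show that any $f\colon\alpha\to\Ord$ in $V[G][H]$ with $\alpha<\kappa$ already lies in $V[G]$. Writing $V[G][H]=V[H][G]$, fix a $\PP$-name $\dot f\in V[H]$ with $\dot f^G=f$.

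Working in $V[H]$, where $\PP$ has $\kappa$-c.c.\ by hypothesis, set $A_\beta=\{\gamma\colon\exists p\in\PP,\ p\forces^{V[H]}\dot f(\check\beta)=\check\gamma\}$ for each $\beta<\alpha$; witnesses for distinct values are pairwise incompatible, so $|A_\beta|<\kappa$. For each $\gamma\in A_\beta$ choose a maximal antichain $C_{\beta,\gamma}\subseteq\{p\colon p\forces^{V[H]}\dot f(\check\beta)=\check\gamma\}$, again of size $<\kappa$. The key observation is that since $V\models\ZFC$ and $\QQ$ is $\kappa$-distributive over $V$, $V[H]$ adds no $<\kappa$-sequence of elements of $V$: any such sequence has range inside some $V_\gamma^V$, which is well-orderable in $V$, reducing to the ordinal case. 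Consequently each $A_\beta\in V$, each $C_{\beta,\gamma}\in V$, and, using regularity of $\kappa$ to bound $|\alpha\times\bigcup_\beta A_\beta|<\kappa$ in $V$, the whole array $\langle C_{\beta,\gamma}\colon\beta<\alpha,\,\gamma\in A_\beta\rangle$ lies in $V$ as well.

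Now define in $V$ the $\PP$-name $\dot f^*=\{\tup{p,\check{\tup{\beta,\gamma}}}\colon\beta<\alpha,\ \gamma\in A_\beta,\ p\in C_{\beta,\gamma}\}$. For each $\beta<\alpha$ the union $\bigcup_{\gamma\in A_\beta}C_{\beta,\gamma}$ is a maximal antichain of $\PP$ in $V[H]$---any $p\in\PP$ extends to some $p'$ deciding $\dot f(\check\beta)$ as some $\check\gamma$, and then $p'$ is compatible with an element of $C_{\beta,\gamma}$---so $G$ meets it in a unique condition, lying in some $C_{\beta,\gamma_\beta}$, and that condition forces $\dot f(\check\beta)=\check\gamma_\beta$. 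Hence $\dot f^{*G}(\beta)=\gamma_\beta=f(\beta)$, so $f=\dot f^{*G}\in V[G]$, as desired. The only delicate step is the transfer in paragraph two: $\kappa$-distributivity is stated for sequences of ordinals, but I use it for sequences of subsets of $\PP$, a standard upgrade in the presence of $\ZFC$ in $V$ that is the linchpin connecting the $\kappa$-c.c.\ of $\PP$ over $V[H]$ to the existence of a $\PP$-name in $V$ computing $f$.
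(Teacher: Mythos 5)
Your proof is correct and follows essentially the same route as the paper: swap the product to view $f$ as computed by a $\PP$-name in the $\QQ$-extension, use the $\kappa$-c.c.\ of $\PP$ there to compress that name to one of size $<\kappa$, and pull it back into $V$ via the $\kappa$-distributivity of $\QQ$ (upgraded from ordinal sequences to sequences of ground-model sets using $\AC$ in $V$). The paper's version is just a terser statement of the same argument, phrased for a name for a set of ordinals of size $<\kappa$ rather than a function on $\alpha<\kappa$.
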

\begin{proof}
Let $\dot A$ be a $\PP\times\QQ$-name for a set of ordinals such that $\forces_{\PP\times\QQ}|\dot A|<\check\kappa$. Let $G$ be a $V$-generic filter for $\QQ$, working in $V[G]$ we still have that $\PP$ has $\kappa$-c.c.\ and $\kappa$ is still a regular cardinal. Let $\dot A^G$ denote the $\PP$-name obtained from $\dot A$ and $G$; there is some $\PP$-name $\dot B$ such that $|\dot B|<\kappa$ and $\forces_\PP\dot A^G=\dot B$. Using the fact that $\QQ$ is $\kappa$-distributive we obtain that $\dot B\in V$, and therefore there is some $q\in G$ such that $(1_\PP,q)\forces_{\PP\times\QQ}\dot A=\dot B$. Thus, every name for a set of ordinals smaller than $\kappa$ can be realized as a $\PP$-name in $V$ already, so $\forces_\PP``\check\QQ\text{ is }\kappa\text{-distributive}"$ as wanted.
\end{proof}

\section{The Fodor property and some theorems in \texorpdfstring{$\ZF$}{ZF}}\label{sect:fodor}
\begin{theorem}
Suppose that $\lambda$ is an uncountable regular cardinal, $\{S_\alpha\mid\alpha<\mu\}$ is a sequence of clubs of $\lambda$. If $\mu<\lambda$, then $\bigcap\{S_\alpha\mid\alpha<\mu\}$ is a club; and if $\mu=\lambda$ then the diagonal intersection $\triangle\{S_\alpha\mid\alpha<\mu\}$ is a club.
\end{theorem}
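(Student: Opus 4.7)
The plan is to prove both parts by explicitly constructing witnesses in a choice-free way, exploiting the fact that sets of ordinals always have canonical least elements.

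\textbf{Setup.} For each $\alpha<\mu$ define the canonical ``next element'' function $f_\alpha\colon\lambda\to\lambda$ by
\[f_\alpha(\beta)=\min\{x\in S_\alpha\mid x>\beta\}.\]
This is well-defined in $\ZF$ since each $S_\alpha$ is unbounded and we are taking the minimum of a nonempty set of ordinals. The closure parts of both statements are routine and use no choice: if $\beta<\lambda$ is a limit of elements of $\bigcap_{\alpha<\mu}S_\alpha$, then $\beta$ is a limit of $S_\alpha$ for each $\alpha<\mu$ and hence lies in every $S_\alpha$; the analogous observation handles the diagonal intersection (for each $\alpha<\beta$ cofinally many elements of $\beta$ lie in $S_\alpha$).

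\textbf{Unboundedness, case $\mu<\lambda$.} Given $\gamma_0<\lambda$, define recursively
\[\gamma_{n+1}=\sup\{f_\alpha(\gamma_n)\mid\alpha<\mu\}.\]
This sequence is a definable object, so no choice is required. Since $\mu<\lambda$ and $\lambda$ is regular, each $\gamma_{n+1}$ is below $\lambda$, and strictly above $\gamma_n$. Set $\gamma=\sup_n\gamma_n<\lambda$ (using $\cf(\lambda)>\omega$). For each fixed $\alpha<\mu$, the elements $f_\alpha(\gamma_n)\in S_\alpha$ lie in the interval $(\gamma_n,\gamma_{n+1}]$, so they form a strictly increasing sequence with supremum $\gamma$; since $S_\alpha$ is closed, $\gamma\in S_\alpha$. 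Therefore $\gamma\in\bigcap_{\alpha<\mu}S_\alpha$ and sits above $\gamma_0$.

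\textbf{Unboundedness, case $\mu=\lambda$ (diagonal intersection).} The same idea works, but we only invoke the $f_\alpha$ with $\alpha<\gamma_n$ at stage $n$. Given $\gamma_0<\lambda$, define
\[\gamma_{n+1}=\max\bigl(\gamma_n+1,\ \sup\{f_\alpha(\gamma_n)\mid\alpha<\gamma_n\}\bigr),\]
which is below $\lambda$ by regularity since $\gamma_n<\lambda$. Let $\gamma=\sup_n\gamma_n<\lambda$. For any $\alpha<\gamma$ there is an $N$ with $\alpha<\gamma_n$ for all $n\geq N$, so the sequence $f_\alpha(\gamma_n)$ for $n\geq N$ is an increasing sequence in $S_\alpha$ cofinal in $\gamma$, giving $\gamma\in S_\alpha$. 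Hence $\gamma$ lies in $S_\alpha$ for every $\alpha<\gamma$, i.e.\ in the diagonal intersection, and $\gamma>\gamma_0$.

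\textbf{Where choice could sneak in.} The only potential use of choice would be in picking witnesses for unboundedness in each $S_\alpha$; this is eliminated by taking the least such witness. Regularity of $\lambda$ is used twice: once so that $\mu$-many (or $\gamma_n$-many) ordinals below $\lambda$ have supremum below $\lambda$, and once so that a countable increasing sequence below $\lambda$ has supremum below $\lambda$ (this latter fact requires $\lambda$ uncountable). No other fragment of $\AC$ is needed.
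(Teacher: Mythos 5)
Your proof is correct, and it is careful in exactly the right places: the only step in the classical argument where choice threatens to enter is the selection of witnesses to unboundedness, and you eliminate it by always taking the $\min$ of a nonempty set of ordinals, so the whole recursion $\langle\gamma_n\mid n<\omega\rangle$ is a single definable object. The paper takes a different (shorter) route: it observes that the sequence of clubs, coded as a set of pairs of ordinals $S=\{\tup{\alpha,\beta}\mid\beta\in S_\alpha\}$, generates an inner model $L[S]\subseteq V$ in which $\AC$ holds and $\lambda$ is still regular (regularity is downward absolute to inner models); the usual $\ZFC$ proof then runs inside $L[S]$, and ``closed and unbounded in $\lambda$'' is absolute upward to $V$. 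The paper does remark in passing that ``the usual proof in $\ZFC$ can be repeated,'' which is precisely what you have written out. What your version buys is self-containedness and an explicit audit of where $\AC$ could have been used; what the absoluteness trick buys is brevity and reusability (the same one-line device handles any statement about a set of ordinals that is absolute and provable in $\ZFC$). Two tiny points you may as well note: in the case $\mu<\lambda$ the bound $\gamma_{n+1}<\lambda$ follows because $\alpha\mapsto f_\alpha(\gamma_n)$ is a definable map from $\mu$ into $\lambda$, which cannot be cofinal since $\cf(\lambda)=\lambda>\mu$; and the strict increase $\gamma_{n+1}>\gamma_n$ needs $\mu\geq 1$ (the case $\mu=0$ being trivial).
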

\begin{proof}
The usual proof in $\ZFC$ can be repeated, or we can use an absoluteness trick. First note that $S=\{\tup{\alpha,\beta}\mid\beta\in S_\alpha\}\subseteq L$, and therefore in $L[S]\subseteq V$ we have the sequence of the clubs; $\lambda$ is a regular cardinal; and the axiom of choice holds true. Therefore the intersection (or diagonal intersection) is a club in $L[S]$. Being closed and unbounded in $\lambda$ is absolute between transitive models, and therefore the same holds in $V$.
\end{proof}
\begin{definition}
Let $\lambda$ be an uncountable regular cardinal and $\mu\leq\lambda$. We say that $\lambda$ has the \textit{$(\lambda,\mu)$-Fodor property} if whenever $S\subseteq\lambda$ is a stationary set and $f\colon S\to\lambda$ is a regressive function with range of cardinality $\leq\mu$, there is some $\alpha$ such that $f^{-1}(\{\alpha\})$ is stationary.
\end{definition}
\begin{theorem}
Let $\lambda$ be an uncountable regular cardinal. The following are equivalent for any $\mu<\lambda$:
\begin{enumerate}
\item $\lambda$ has the $(\lambda,\mu)$-Fodor property.
\item The club filter of $\lambda$ is $\mu^+$-complete.
\end{enumerate}
Furthermore, if $\mu=\lambda$, then $(\lambda,\lambda)$-Fodor property holds if and only if the club filter of $\lambda$ is closed under diagonal intersection of $\lambda$-sequences, i.e.\ the club filter is normal.
\end{theorem}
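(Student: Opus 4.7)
My plan is to pass through the dual non-stationary ideal, which lets me speak of concrete witnesses and avoids implicitly choosing a club inside each member of the club filter (a choice that is not available in $\ZF$). Concretely, the club filter being $\mu^+$-complete is equivalent to: for every sequence $\tup{A_\alpha\mid\alpha<\mu}$ of non-stationary subsets of $\lambda$, the union $\bigcup_{\alpha<\mu} A_\alpha$ is non-stationary; and normality is equivalent to: for every sequence $\tup{A_\alpha\mid\alpha<\lambda}$ of non-stationary sets, the diagonal union $\triangledown_\alpha A_\alpha := \{\beta<\lambda\mid\exists\alpha<\beta\,(\beta\in A_\alpha)\}$ is non-stationary. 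With this dictionary, both equivalences become essentially the standard $\ZFC$ arguments.

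For $(1)\Rightarrow(2)$ with $\mu<\lambda$, I would take non-stationary $A_\alpha$, $\alpha<\mu$, and suppose the union is stationary. After restricting to the tail past $\mu$ (still stationary since $\mu<\lambda$), the map $f(\beta):=\min\{\alpha<\mu\mid\beta\in A_\alpha\}$ is regressive with range contained in $\mu$, so the $(\lambda,\mu)$-Fodor property would give a stationary fibre inside some $A_{\alpha^*}$ and a contradiction. For $(2)\Rightarrow(1)$, given a stationary $S$ and regressive $f\colon S\to\lambda$ with $|\rng f|\leq\mu$ whose fibres are all non-stationary, I would enumerate $\rng f$ (a well-ordered subset of $\lambda$, hence of order type $<\mu^+$) to present $S$ as a union of fewer than $\mu^+$ non-stationary sets, contradicting $\mu^+$-completeness of the non-stationary ideal.

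The ``furthermore'' clause runs in parallel, with $\triangledown$ in place of $\bigcup$. For the forward direction, a stationary diagonal union $\triangledown_\alpha A_\alpha$ supports the regressive function $f(\beta):=\min\{\alpha<\beta\mid\beta\in A_\alpha\}$ on it, and a stationary fibre produced by $(\lambda,\lambda)$-Fodor again lies inside a single $A_{\alpha^*}$. For the backward direction, a counterexample $(S,f)$ to $(\lambda,\lambda)$-Fodor yields $A_\alpha:=f^{-1}(\{\alpha\})$ (padded with $\emptyset$ where necessary to form a full $\lambda$-sequence), and regressivity of $f$ gives $S\subseteq\triangledown_\alpha A_\alpha$, so normality fails.

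I do not anticipate any serious technical obstacle. The only point requiring care is to phrase everything throughout in terms of sequences of non-stationary sets rather than arbitrary members of the filter, so that no hidden instance of choice is used to pick a club out of each filter element; once this discipline is maintained, the classical proofs transpose essentially verbatim.
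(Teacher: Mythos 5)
Your proposal is correct and is essentially the paper's own argument: the forward direction uses the same regressive ``least index of failure'' function on the tail past $\mu$, and the backward direction decomposes $S$ into its (re-indexed) fibres, with the $\mu=\lambda$ case handled by swapping in diagonal intersections/unions. Phrasing everything through the dual non-stationary ideal rather than through sets containing clubs is only a cosmetic repackaging, and your care about not choosing clubs is the same care the paper takes implicitly.
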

\begin{proof}
Assume $\mu<\lambda$ and the $(\lambda,\mu)$-Fodor property holds for $\lambda$. Let $\{S_\alpha\mid\alpha<\mu\}$ be such that each $S_\alpha$ contains a club. If $S=\bigcap\{S_\alpha\mid\alpha<\mu\}$ does not contain a club, then $T=\lambda\setminus S$ is stationary. Define a function $f$ on $T\setminus\mu$, $f(\xi)=\alpha$ if and only if $\alpha$ is the least such that $\xi\notin S_\alpha$. Clearly $f$ is regressive on a stationary set, so there is some $\alpha$ such that $f^{-1}(\{\alpha\})$ is stationary. But then $f^{-1}(\{\alpha\})\cap S_\alpha=\varnothing$, which is impossible since $S_\alpha$ contains a club. Therefore $T$ must be non-stationary, so $S$ contains a club.

In the other direction, suppose that $f\colon S\to\lambda$ is a regressive function with range of cardinality $\leq\mu$. We may assume that $\rng(f)\subseteq\mu$ by composing with a suitable bijection. If there is no $\alpha<\mu$ such that $f^{-1}(\{\alpha\})$ is stationary, then $T_\alpha=\lambda\setminus f^{-1}(\{\alpha\})$ contains a club for all $\alpha$, and therefore $T=\bigcap\{T_\alpha\mid\alpha<\mu\}$ contains a club. But this is impossible as for $\xi\in S\cap T$ it follows that $f(\xi)$ is undefined, and such $\xi$ exists because $S$ is stationary.

For the case that $\mu=\lambda$ the same proof works, replacing the intersection by a diagonal intersection where needed.
\end{proof}

\begin{theorem}
Assume that $\lambda$ is an uncountable regular cardinal, and $\mu\leq\lambda$ is regular such that $\AC_\mu$ holds. Then the $(\lambda,\mu)$-Fodor property holds.
\end{theorem}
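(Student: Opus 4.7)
The plan is to argue directly by contradiction, reducing the Fodor property to the closure of the club filter under intersections and diagonal intersections, both of which are already available from the first theorem of this section. Suppose for contradiction that $f\colon S\to\lambda$ is regressive, $|\rng(f)|\leq\mu$, and no fiber $f^{-1}(\{\alpha\})$ is stationary. Then for each $\alpha\in\rng(f)$ the complement $\lambda\setminus f^{-1}(\{\alpha\})$ contains a club. For every $\alpha<\lambda$ let $A_\alpha$ be the set of clubs of $\lambda$ disjoint from $f^{-1}(\{\alpha\})$, extended by the trivial club $\lambda$ itself whenever $\alpha\notin\rng(f)$, so that each $A_\alpha$ is nonempty.

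If $\mu<\lambda$, there are at most $\mu$ relevant indices (those in $\rng(f)$), so $\AC_\mu$ picks a club $C_\alpha\in A_\alpha$ for each $\alpha\in\rng(f)$. By the first theorem of this section the intersection $C=\bigcap_{\alpha\in\rng(f)}C_\alpha$ is a club of $\lambda$, hence $S\cap C$ is stationary and in particular nonempty. Any $\beta>0$ in $S\cap C$ satisfies $f(\beta)\in\rng(f)$ and $\beta\in C_{f(\beta)}$, yet $\beta\in f^{-1}(\{f(\beta)\})$, contradicting the disjointness built into the choice of $C_{f(\beta)}$. If $\mu=\lambda$, we instead invoke $\AC_\lambda=\AC_\mu$ on the full family $\{A_\alpha\mid\alpha<\lambda\}$ to obtain $\tup{C_\alpha\mid\alpha<\lambda}$ and form the diagonal intersection $C=\triangle\{C_\alpha\mid\alpha<\lambda\}$, which is again a club by the same theorem; any $\beta>0$ in $S\cap C$ has $f(\beta)<\beta$ and $\beta\in C_{f(\beta)}$ by the definition of $\triangle$, yielding the same contradiction.

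The only point demanding care is the bookkeeping in the $\mu=\lambda$ case: we must have a family indexed by all of $\lambda$ for the diagonal intersection to be defined, which is why $A_\alpha$ is padded with $\lambda$ outside $\rng(f)$. Observe that the regularity of $\mu$ is not actually used in this argument; it is a natural hypothesis when combining the theorem with the earlier equivalence between the Fodor property and $\mu^+$-completeness (or normality) of the club filter. There is no genuine technical obstacle here — the work was done in the previous theorem, and $\AC_\mu$ is exactly the ingredient needed to convert ``each fiber is co-stationary'' into a simultaneous choice of witnessing clubs.
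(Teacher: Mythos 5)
Your proof is correct and uses exactly the paper's key ingredients: $\AC_\mu$ to simultaneously choose witnessing clubs, followed by the $\ZF$ theorem on (diagonal) intersections of clubs. The only cosmetic difference is that you argue directly by contradiction on the Fodor property, inlining the relevant direction of the preceding equivalence theorem, whereas the paper simply cites that equivalence and proves the $\mu^+$-completeness (resp.\ normality) of the club filter in one line; your observation that the regularity of $\mu$ is not needed is also accurate.
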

\begin{proof}
By the previous theorem, it is enough to prove that the club filter is $\mu^+$-complete, or closed under diagonal intersections. And indeed, if $\{S_\alpha\mid\alpha<\mu\}$ is such that $S_\alpha$ contains a club for each $\alpha$, then by $\AC_\mu$ we can choose such club $T_\alpha$, for every $\alpha$. The (diagonal) intersection of the $T_\alpha$'s witnesses the wanted closure.
\end{proof}
\begin{theorem}
The statement ``For every uncountable regular $\lambda$, the $(\lambda,\lambda)$-Fodor property holds'' does not imply $\AC_\omega$.
\end{theorem}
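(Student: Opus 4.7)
The plan is to verify both the failure of $\AC_\omega$ and the $(\lambda,\lambda)$-Fodor property at every uncountable regular $\lambda$ inside the basic Cohen symmetric model. Start from $V\models\ZFC+\GCH$, take $\PP=\Add(\omega,\omega)$, let $\sG$ be the group of automorphisms of $\PP$ induced by permutations of the $\omega$-many columns, and let $\sF$ be the normal filter generated by the pointwise stabilizers $\fix(E)$ for finite $E\subseteq\omega$. In the resulting symmetric extension $M$ the set of added Cohen reals is infinite and Dedekind-finite, so $\AC_\omega$ already fails, and the task reduces to checking the $(\lambda,\lambda)$-Fodor property in $M$.

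The heart of the argument is a homogeneity descent. Given a regressive $f\colon S\to\lambda$ in $M$ with $S$ stationary in $M$, pick $\dot f\in\HS$ with $\fix(E)\leq\sym(\dot f)$ for some finite $E\subseteq\omega$, and write $\PP\cong\Add(\omega,E)\times\QQ$ with $\QQ=\Add(\omega,\omega\setminus E)$. Viewed as a $\QQ$-name over $V[G\restriction E]$, the name $\dot f$ is fixed by $\fix(E)$ which acts as the full automorphism group of the weakly homogeneous forcing $\QQ$. The standard argument—that the set of conditions forcing $\dot f(\check\alpha)=\check\beta$ is $\fix(E)$-invariant and, being nonempty, must be dense—then yields, for each $\alpha<\lambda$, a unique $\beta$ with $1_\QQ\forces_\QQ\dot f(\check\alpha)=\check\beta$. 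This places $f\in V[G\restriction E]$, which is a $\ZFC$-model of the same cardinals as $V$ because $\Add(\omega,E)$ is countable.

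The next step is to transfer clubs between $M$ and the $\ZFC$-submodels $V[G\restriction E']$. Any set of ordinals in $V[G\restriction E']$ has a name built from ordinal checks paired with conditions in $\Add(\omega,E')$, which is trivially $\fix(E')$-invariant and hereditarily symmetric; hence the set itself lies in $M$. In particular every club of $\lambda$ in $V[G\restriction E]$ is a club in $M$, which forces our $S$ to be stationary already in $V[G\restriction E]$. Applying genuine Fodor inside the $\ZFC$-model $V[G\restriction E]$ produces $\alpha$ with $T:=f^{-1}(\{\alpha\})$ stationary there. For an arbitrary club $D\in M$ the same descent places $D\in V[G\restriction E']$ for some finite $E'$, and since $V[G\restriction(E\cup E')]$ is a c.c.c.\ extension of $V[G\restriction E]$, stationarity of $T$ is preserved and $T\cap D\neq\varnothing$. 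Thus $T$ is stationary in $M$.

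The main obstacle is the descent step: it is what guarantees that every regressive function on $\lambda$, as well as every club of $\lambda$, in $M$ already lives in a $\ZFC$ intermediate model where the classical Fodor lemma applies. Once the descent is in hand, the rest reduces to routine bookkeeping of stationary sets through the c.c.c.\ inclusions $V[G\restriction E]\subseteq V[G\restriction E']$.
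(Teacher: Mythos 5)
Your proof is correct, but it takes a genuinely different route from the paper's. The paper works with an \emph{arbitrary} intermediate $\ZF$ model $M$ with $V\subseteq M\subseteq V[G]$ for a Cohen extension $V[G]$: given a $\lambda$-sequence of sets containing clubs in $M$, it codes the whole sequence as a single set of ordinals $\vec S$, observes that $V[\vec S]$ is an intermediate $\ZFC$ model (this is the one nontrivial ingredient, the intermediate-model theorem), forms the diagonal intersection there, and notes that clubs are upward absolute into $M$ since Cohen forcing is c.c.c.; this verifies normality of the club filter, which is equivalent to $(\lambda,\lambda)$-Fodor by the preceding theorem, and then any intermediate model violating $\AC_\omega$ (e.g.\ Cohen's first model) finishes the job. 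You instead commit to the basic Cohen symmetric model and run a ``bottom-up'' support analysis: the homogeneity of $\Add(\omega,\omega\setminus E)$ under permutations of the columns outside $E$ places every set of ordinals and every regressive function of $M$ into some finite sub-extension $V[G\restriction E]$, where genuine Fodor applies, and stationarity of the resulting fiber is pushed back up to $M$ through the countable (hence c.c.c.) inclusions $V[G\restriction E]\subseteq V[G\restriction(E\cup E')]$. Your approach buys self-containedness --- it identifies the intermediate $\ZFC$ model explicitly as $V[G\restriction E]$ rather than invoking the general fact that $V[\vec S]$ models $\ZFC$ --- and it proves the Fodor property directly rather than via the equivalence with normality of the club filter; the paper's approach buys generality, since it yields the conclusion simultaneously for \emph{every} $\ZF$ model between $V$ and $V[G]$ rather than for one specific symmetric extension. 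Two small points of care in your write-up: the decision of $\dot f(\check\alpha)$ by $1_\QQ$ of course only applies to $\alpha$ in the (also descending) domain $S$, and the phrase ``acts as the full automorphism group of $\QQ$'' should be read as ``witnesses the weak homogeneity of $\QQ$'' --- the column permutations are far from all automorphisms, but they do move the support of any condition off any other, which is all the argument needs.
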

\begin{proof}
Gitik has proved the consistency of ``every limit ordinal has cofinality $\omega$'' in \cite{Gitik:1980}. In such model, the statement holds vacuously. However this is a serious cop out from an actual proof, so we will instead provide an outline of the consistency of ``For every uncountable regular $\lambda$, the $(\lambda,\lambda)$-Fodor property holds'' along with ``Every successor cardinal is regular'' and the failure of $\AC_\omega$.

Suppose that $V$ is a model of $\ZFC$ and $V[G]$ is a generic extension of $V$ obtained by adding Cohen reals. If $M$ is a model of $\ZF$ which lies between $V$ and $V[G]$, then $M$, $V$ and $V[G]$ agree on cofinality of the ordinals, and therefore agree on which ordinals are cardinals. In particular every successor cardinal in $M$ is regular there.

Suppose now that $\lambda$ is regular and uncountable, and $\{S_\alpha\mid\alpha<\lambda\}\in M$ is a family of sets which contain clubs. Then $\vec S=\{\tup{\alpha,\beta}\mid\beta\in S_\alpha\}$ is a subset of $V$. It follows that $V[S]$ is a model of $\ZFC$ between $V$ and $V[G]$, and all three models agree on the notion of clubs because Cohen forcing is c.c.c. Therefore the diagonal intersection of the $S_\alpha$'s, which lie in $V[S]$ contains a club there, which is also in $M$. Therefore $(\lambda,\lambda)$-Fodor holds.

The models which lie between $V$ and even an extension by a single Cohen real are many. And there are plenty of which where $\AC_\omega$ fails. For example, Cohen's first model, which is the standard model in which the reals cannot be well-ordered (details can be found in \cite[\S5.3]{Jech:AC1973}).
\end{proof}
\section{Destroying the completeness of the club filter at \texorpdfstring{$\kappa$}{k}}\label{sect:local}
In \cite{Bilinsky-Gitik:2012} Eilon Bilinsky and Moti Gitik constructed a model in which a measurable cardinal carries no normal measures. The paper begins by constructing a model in which the formerly-measurable cardinal carries no normal filters at all. In such model, of course, the Fodor property fails for that cardinal. The approached taken there is suited for inaccessible cardinals. Here consider a slightly different approach which is somewhat more flexible, and can be later used for the global failure.

Let $\kappa$ be an uncountable regular cardinal, and suppose that $\mu\leq\kappa$ is a regular cardinal. We will construct a model in which the $(\kappa,\mu)$-Fodor property fails, but $\DC_{<\mu}$ holds. 

\begin{lemma}
Suppose that $\lambda$ is regular and $S\subseteq\lambda$ is a stationary set. Let $\club(S)$ be the forcing for shooting a club through $S$ using closed and bounded subsets of $S$ as conditions. Then $\club(S)$ is weakly homogeneous.
\end{lemma}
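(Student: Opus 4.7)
\emph{Plan.}
Given $p,q\in\club(S)$ the plan is to construct $\pi\in\aut(\club(S))$ with $\pi(p)$ compatible with $q$. The first observation I would use is that in $\club(S)$ (with the end-extension ordering) two conditions are compatible if and only if one end-extends the other: given a common extension $r$ of $p$ and $q$, the initial segments $r\cap(\max p+1)=p$ and $r\cap(\max q+1)=q$ force one of $p,q$ to be an initial segment of the other. So the task reduces to finding $\pi$ for which $\pi(p)$ and $q$ are comparable in the end-extension order.

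Assuming without loss of generality (by swapping $p$ and $q$) that $\otp(p)\leq\otp(q)$, and enumerating $p=\{p_i\}_{i<\mu}$ and $q=\{q_i\}_{i<\nu}$ in increasing order with $\mu\leq\nu$, the concrete aim is $\pi(p)=\{q_i\}_{i<\mu}$, an initial segment of $q$; then $q\leq\pi(p)$ and they are compatible. The structural fact driving the construction is that $\club(S)$ is tree-like: for any condition $r$, the sub-poset of its extensions is canonically isomorphic to $\club(S\cap(\max r,\lambda))$, and the immediate successors of $r$ in this sub-poset are exactly $\{r\cup\{\alpha\}\mid \alpha\in S,\,\alpha>\max r\}$. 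Because $S$ is stationary in the regular uncountable $\lambda$, every tail $S\cap(\alpha,\lambda)$ is a stationary subset of $\lambda$ of order type $\lambda$, which supplies an abundant stock of isomorphisms among sub-forcings of the form $\club(S\cap(\alpha,\lambda))$.

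I would construct $\pi$ by recursion on the tree. At the root $\varnothing$, choose a bijection of the children that sends $\{p_0\}$ to $\{q_0\}$ (and swaps them, with identity on other first-level conditions if $p,q$ share no common initial segment; otherwise restrict the whole construction to the sub-forcing below their longest common initial segment). Given $\pi\bigl(\{p_0,\dots,p_{i-1}\}\bigr)=\{q_0,\dots,q_{i-1}\}$, choose at this node a bijection of the children of $\{p_0,\dots,p_{i-1}\}$ sending $\{p_0,\dots,p_i\}$ to $\{q_0,\dots,q_i\}$. At nodes not on the $p$- or $q$-chain, extend by any convenient (e.g.\ identity) isomorphism of the sub-trees.

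The main obstacle is verifying that this recursive prescription does yield a genuine poset automorphism. At each step one must exhibit a full poset isomorphism $\club(S\cap(\alpha,\lambda))\to\club(S\cap(\beta,\lambda))$ extending the specified bijection of immediate children; this is itself a recursive construction that must be carried through all the way down. Limit stages of the recursion (when $p$ has transfinite length, so the closure condition matters) require the extra care of ensuring coherence of these sub-tree isomorphisms, which is where the stationarity of tails of $S$ in the regular $\lambda$ is essential.
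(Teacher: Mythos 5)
Your reduction of compatibility to comparability is correct, but the step you flag as the ``main obstacle'' --- extending a bijection of immediate successors to a genuine isomorphism of the cones below $p$ and $q$, i.e.\ exhibiting poset isomorphisms $\club(S\cap(\alpha,\lambda))\cong\club(S\cap(\beta,\lambda))$ --- is not a technicality to be handled with care; it is where the approach fails. The isomorphism type of $\club(T)$ is governed not by stationarity of $T$ but by which bounded increasing sequences from $T$ have their supremum in $T$: a chain of limit length in the tree has a lower bound in the forcing if and only if the supremum of its union lies in $S$, and any poset isomorphism must preserve this pattern. Different tails of the same stationary set need not agree on it, and in fact the statement you are implicitly proving --- that $\aut(\club(S))$ itself witnesses weak homogeneity --- is false for general stationary $S$. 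Concretely, let $\lambda=\omega_2$ and $S=C\cup E$, where $C$ is a closed set of order type $\omega+1$ and $E$ is a stationary subset of $\{\xi<\omega_2\mid\cf(\xi)=\omega_1\}$ lying entirely above $\max C$. Any infinite condition contains the supremum of its first $\omega$ elements, an ordinal of countable cofinality, so no infinite condition is contained in $E$; hence $\{\delta\}$ with $\delta\in E$ has only finite extensions, while $\{\min C\}$ has the extension $C$ with $\omega$ many proper predecessors. Automorphisms of the poset preserve both the number of strictly weaker conditions (so they map singletons to singletons) and the property ``some extension has at least $\omega$ strictly weaker conditions above it''; since two singletons are compatible only if equal, no automorphism sends $\{\min C\}$ to a condition compatible with $\{\delta\}$.

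The lemma is nonetheless true when weak homogeneity is read, as the application requires, at the level of the Boolean completion, and the paper's proof sidesteps your difficulty entirely: by Grigorieff's criterion it suffices to show that for every $V$-generic $G$ and every condition $p$ there is a $V$-generic $H$ with $p\in H$ and $V[G]=V[H]$, and this is immediate because replacing a bounded initial segment of the generic club by $p$ changes neither genericity nor the extension. If you want an automorphism-based argument you must work in $\mathrm{RO}(\club(S))$, where the required automorphisms exist but are not induced by automorphisms of the poset of closed bounded subsets of $S$; the direct recursive construction you propose cannot be completed.
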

\begin{proof}
By \cite[\S3.5,~Th.~1]{Grigorieff:1975} it suffices to show that if $G$ is $V$-generic for $\club(S)$, then for every $p\in\club(S)$ there is some $V$-generic $H$ with $p\in H$ and $V[G]=V[H]$. This is trivial, since if $G$ is the generic club, we can change any bounded part of $G$ to accommodate any condition as an initial segment while preserving $V$-genericity.
\end{proof}

Our forcing is given by a two-step iteration. The first step is to add a Cohen subset to $\kappa$, considering this subset as a function from $\kappa$ to $\mu$, this gives us a sequence of $\mu$ stationary sets, $S_\alpha$, with an empty intersection. We will shoot clubs into each of these stationary sets by taking a ${<}\mu$-support product of $\club(S_\alpha)$. We will argue that we have sufficient closure to ensure that $\DC_{<\mu}$ holds, but while each $S_\alpha$ contains a club, their intersection is empty.

Let $\QQ_0$ be the forcing whose conditions are partial functions $p\colon\kappa\to\mu$ with $\dom p$ bounded, ordered by inverse inclusion. We will denote by $g$ the generic function added by $\QQ_0$ and $\dot g$ will be its canonical name. Denote by $S_\alpha$ the set $g^{-1}([\alpha,\mu))$, then by easy density arguments $S_\alpha$ is fat for every $\alpha<\mu$. Denote by $\QQ_{1,\alpha}$ the forcing $\club(S_\alpha)$, and $\QQ_1$ is the ${<}\mu$-support product $\prod_{\alpha<\mu}\QQ_{1,\alpha}$. If $E\subseteq\mu$, we will write $\QQ_1\restriction E$ to denote the restriction of the product to $\prod_{\alpha\in E}\QQ_{1,\alpha}$, and if $q$ is a condition in $\QQ_1$, the restriction $q\restriction E$ will denote the projection of $q$ to $\QQ_1\restriction E$. Note that if $E$ is bounded in $\mu$, then $\QQ_1\restriction E$ is in fact a full-support product.

We take $\PP$ to be the two-step iteration $\QQ_0\ast\dot\QQ_1$. Using the technique described in \cite[\S3]{Karagila:2016}, every name $\dot\sigma$ for an automorphism of $\QQ_1$ can be used to define an automorphism $\sigma$ of $\PP$ using the following definition:\[\sigma\tup{p,\dot q}=\tup{p,\dot\sigma(\dot q)}.\footnote{We avoid the integral notation here, since we only use the automorphisms of $\dot\QQ_1$.}\]
Our group of automorphisms will be the group of automorphisms induced from names of automorphisms of $\dot\QQ_1$ which are forced to belong to the ${<}\mu$-support product of the groups $\aut(\QQ_{1,\alpha})$. When $\sigma$ is an automorphism, it will be the one induced by the name $\dot\sigma$, and vice versa.
\begin{remark}
Note that the conditions of $\dot\QQ_1$ are in fact in $V$. Therefore we can give every name of an automorphism a canonical name. Moreover, \cite[Prop.~3.10]{Karagila:2016} shows that if $1\forces_{\QQ_0}\dot\sigma=\dot\sigma'$, then they actually induce the same automorpshim. This defines an equivalence relation on the set of canonical names, and we can choose one from each equivalence class if we wish to do so for defining the group in specific details.
\end{remark}

As the theorem above show, each $\QQ_{1,\alpha}$ is weakly homogeneous. So the product $\QQ_1$ is weakly homogeneous. We define $\sF$ to be the filter of subgroups generated by $\fix(E)$ for $E\in[\mu]^{<\mu}$, where $\sigma\in\fix(E)$ if for every $\alpha\in E$, $\forces_{\QQ_0}\dot\sigma\restriction\dot\QQ_{1,\alpha}=\id$. Note that $\mu\leq\kappa$ and $\QQ_0$ is $\kappa$-closed, therefore $[\kappa]^{<\mu}$ is the same in $V[g]$ and in $V$.

We will prove two lemmas in order to obtain the wanted result. 
\begin{lemma}\label{lemma:approximation}
Suppose that $\dot A\in\HS$ is a $\PP$-name for a set of ordinals. Then there is some $\alpha<\mu$ and $\dot A'\in\HS$ which is a $\QQ_0\ast\dot\QQ_1\restriction\alpha$-name and $1\forces\dot A=\dot A'$.
\end{lemma}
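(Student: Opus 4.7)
The plan is to extract the bound $\alpha$ directly from the symmetry group of $\dot A$, define $\dot A'$ as the natural ``truncation'' of $\dot A$ to the initial segment $\QQ_0\ast\dot\QQ_1\restriction\alpha$, and then use the weak homogeneity of the $\club(S_\beta)$ for $\beta\geq\alpha$ (established in the previous lemma) to argue that the ``tail'' coordinates cannot affect whether $\check\gamma\in\dot A$ is forced.

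More concretely, since $\dot A\in\HS$ there is $E\in[\mu]^{<\mu}$ with $\fix(E)\subseteq\sym(\dot A)$; by the regularity of $\mu$ pick $\alpha<\mu$ with $E\subseteq\alpha$, so that $\fix(\alpha)\subseteq\fix(E)$ (here $\alpha$ is identified with the set of ordinals below it). Define
\[
\dot A'=\sett{\tup{(p,\dot q),\check\gamma}\midd (p,\dot q)\in\QQ_0\ast\dot\QQ_1\restriction\alpha\text{ and }(p,\dot q)\forces_\PP\check\gamma\in\dot A},
\]
viewing $(p,\dot q)$ as a condition in $\PP$ by extending $\dot q$ trivially on coordinates $\geq\alpha$. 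The Symmetry Lemma gives $\fix(E)\subseteq\sym(\dot A')$, so $\dot A'\in\HS$. What remains is $1\forces\dot A=\dot A'$, which reduces to the claim that whenever $(p,\dot q)\in\PP$ decides $\check\gamma\in\dot A$, the restriction $(p,\dot q\restriction\alpha)$ already decides it the same way.

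Suppose not. Then below some common condition $(p,\dot q_0)\in\QQ_0\ast\dot\QQ_1\restriction\alpha$ there are $(p',\dot q')\forces\check\gamma\in\dot A$ and $(p'',\dot q'')\forces\check\gamma\notin\dot A$. Take a common refinement in the $\QQ_0$-coordinate and strengthen further so that, below some $p^*$, we have $p'=p''=p^*$ and $p^*\forces\dot q'\restriction\alpha=\dot q''\restriction\alpha$. Applying the weak homogeneity of each $\QQ_{1,\beta}$ for $\beta\geq\alpha$: working in the $\QQ_0$-extension by $p^*$, for every $\beta$ in the (bounded-in-$\mu$) joint support of $\dot q'$ and $\dot q''$ above $\alpha$ choose $\sigma_\beta\in\aut(\QQ_{1,\beta})$ sending $\dot q'_\beta$ to something compatible with $\dot q''_\beta$, and set $\sigma_\beta=\id$ for all other $\beta$. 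The resulting $\sigma$ has a $\QQ_0$-name, has ${<}\mu$-support, and lies in $\fix(\alpha)\subseteq\fix(E)$, so $\sigma\dot A=\dot A$. By the Symmetry Lemma $(p^*,\sigma\dot q')\forces\check\gamma\in\dot A$, while $(p^*,\sigma\dot q')$ and $(p^*,\dot q'')$ now admit a common extension (they agree strictly below $\alpha$ and are compatible above), yielding a single condition forcing both $\check\gamma\in\dot A$ and $\check\gamma\notin\dot A$, a contradiction.

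The main technical obstacle is that $\dot\QQ_1$ is a $\QQ_0$-name, so $\sigma$ must be produced as a $\QQ_0$-name for an automorphism lying in our group rather than just as an element of some $V[g]$. This is handled by first strengthening $p^*$ enough to pin down the supports and relevant coordinates of $\dot q'$ and $\dot q''$ (or, equivalently, by patching the construction along a maximal antichain of $\QQ_0$); once this bookkeeping is in place, the homogeneity argument at each coordinate $\beta\geq\alpha$ is entirely routine.
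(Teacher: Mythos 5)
Your proposal is correct and follows essentially the same route as the paper's proof: bound the support $E$ of a group $\fix(E)\leq\sym(\dot A)$ by some $\alpha<\mu$, truncate the name to conditions in $\QQ_0\ast\dot\QQ_1\restriction\alpha$, and use the weak homogeneity of the club-shooting coordinates above $\alpha$ together with the invariance of $\dot A$ under $\fix(E)$ to show the truncated condition already decides membership. The paper states this positively (every extension of the restricted condition is compatible with a $\fix(E)$-image of the original, hence forces membership) while you argue by amalgamating two contradictory deciding conditions, but these are the same argument; your explicit attention to $\dot\sigma$ being a $\QQ_0$-name is a point the paper handles only via its earlier remark on canonical names.
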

\begin{proof}
We can replace $\dot A$ by the name $\{\tup{p,\check\xi}\mid p\forces\check\xi\in\dot A\}$, by the Symmetry Lemma both are stable under the same automorphisms, so we may assume that every name which appears in $\dot A$ is of the form $\check\xi$ for some ordinal $\xi$. Let $E\in[\mu]^{<\mu}$ such that $\fix(E)\leq\sym(\dot A)$. Suppose that $\tup{p,\dot q}\forces\check\xi\in\dot A$, then by homogeneity we get that for every $\tup{p,\dot q'}$ with $p\forces\dot q'\leq_{\QQ_1}\dot q\restriction\alpha$, there is some $\dot\sigma$ such that $\dot\sigma\in\fix(E)$ and $\dot\sigma(\dot q\restriction(\mu\setminus E))$ is compatible with $\dot q'$. Therefore $\tup{p,\dot q'}\forces\check\xi\in\dot A$.

It follows that $\tup{p,\dot q\restriction E}\forces\check\xi\in\dot A$. By taking $\alpha=\sup E$ we get the wanted result. Namely, $\forces\dot A=\{\tup{\tup{p,\dot q\restriction\alpha},\check\xi}\mid\tup{p,\dot q}\forces\check\xi\in\dot A\}$.
\end{proof}
\begin{lemma}
Suppose that $\alpha<\mu$. Then the two step iteration $\QQ_0\ast\dot\QQ_1\restriction\alpha$ has a $\kappa$-closed dense subset.
\end{lemma}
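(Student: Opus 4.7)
The plan is to exhibit an explicit dense subset $D \subseteq \QQ_0\ast\dot\QQ_1\restriction\alpha$ whose conditions are ``normalized'' enough that $\kappa$-closure can be verified directly. Since names for conditions in $\dot\QQ_1\restriction\alpha$ can be wild, the first goal is to replace every name by a standard name, which is possible because $\QQ_0$ is $\kappa$-closed and hence decides every name for an element of $V$. The key restriction is then to force the ``top'' of every coordinate to coincide.

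Concretely, let $D$ be the set of pairs $\tup{p,\check q}$ such that $\dom p$ is a successor ordinal $\delta+1$, $p(\delta)=\alpha$, and $q=\tup{q_\beta}_{\beta<\alpha}$ is a sequence in $V$ of closed bounded subsets of $\delta+1$ such that for every $\beta<\alpha$ we have $\max q_\beta=\delta$ and $q_\beta\subseteq\{\xi\leq\delta\mid p(\xi)\geq\beta\}$. Density is straightforward: given $\tup{p_0,\dot q_0}$, use the $\kappa$-closure of $\QQ_0$ to find $p_1\leq p_0$ and $q\in V$ with $p_1\forces\dot q_0=\check q$, extend $p_1$ so that its domain strictly exceeds every $\max q_\beta$, choose a fresh $\delta$ above this domain, declare $p(\delta)=\alpha$, and replace each $q_\beta$ by $q_\beta\cup\{\delta\}$. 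Since $\alpha<\mu$, the single value $\alpha$ lies in $[\beta,\mu)$ for every $\beta<\alpha$, so the new point $\delta$ belongs to $S_\beta$ for all relevant $\beta$ simultaneously.

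For $\kappa$-closure, let $\tup{\tup{p_\gamma,\check q^\gamma}}_{\gamma<\eta}$ be a decreasing sequence in $D$ with $\eta<\kappa$, and let $\delta_\gamma+1=\dom p_\gamma$. If the $\delta_\gamma$ eventually stabilize, then the normalization $\max q_\beta^\gamma=\delta_\gamma$ together with the end-extension ordering on each $\club(S_\beta)$ forces the whole sequence to stabilize, so we are done. Otherwise $\delta=\sup_\gamma\delta_\gamma<\kappa$ by regularity, and we set $p=\bigcup_\gamma p_\gamma\cup\{\tup{\delta,\alpha}\}$ and $q_\beta=\bigcup_\gamma q_\beta^\gamma\cup\{\delta\}$. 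Each $q_\beta^\gamma$ is closed and each $q_\beta^{\gamma'}$ end-extends $q_\beta^\gamma$ for $\gamma<\gamma'$, which makes any limit point of $\bigcup_\gamma q_\beta^\gamma$ below its supremum already belong to some $q_\beta^\gamma$; the only potentially missing limit point is $\delta$ itself, which we explicitly adjoin. Again $p(\delta)=\alpha\geq\beta$ for every $\beta<\alpha$, so $q_\beta\subseteq\{\xi\leq\delta\mid p(\xi)\geq\beta\}$, and $\tup{p,\check q}\in D$ extends every $\tup{p_\gamma,\check q^\gamma}$.

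The main obstacle is handling the closure of the unions $\bigcup_\gamma q_\beta^\gamma$ at the supremum $\delta$: individually each $\club(S_\beta)$ is only $\kappa$-distributive, not $\kappa$-closed, precisely because nothing guarantees that such a supremum lies in $S_\beta$. The trick that makes everything work is that a single fresh value $p(\delta)=\alpha$ places $\delta$ into \emph{all} $S_\beta$ for $\beta<\alpha$ at once, and this is legitimate only because $\alpha<\mu$; this is the one place where the restriction to initial segments of the product is essential.
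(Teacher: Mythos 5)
Your proof is correct and follows essentially the same route as the paper's: the same normalized dense set (conditions whose $\QQ_0$-part has successor domain $\delta+1$ with a top value $\geq\beta$ for all $\beta<\alpha$, and whose $\QQ_1\restriction\alpha$-part is a check name with all coordinates having common maximum $\delta$), the same density argument via deciding the name and one-point extensions, and the same closure argument adjoining the supremum to every coordinate at once. The only differences are cosmetic (you set $p(\delta)=\alpha$ where the paper's definition of $D$ asks for $p(\gamma)>\alpha$, and you treat the stabilizing case explicitly), and your closing remark correctly identifies why the single top value is the crux.
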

\begin{proof}
Let $D$ be the set of conditions of the form $\tup{p,\check q}\in\QQ_0\ast\dot\QQ_1\restriction\alpha$ such that there is some $\gamma<\kappa$ such for all $\beta<\alpha$, $\max q(\beta)=\gamma$, and $\dom p=\gamma+1$ with $p(\gamma)>\alpha$.

Suppose that $\tup{p',\dot q'}$ is an arbitrary condition, by extending $p'$ we can find some condition $p\leq_{\QQ_0}p'$ such that the domain of $p$ is some $\gamma+1$, $p(\gamma)\geq\alpha$ and there is some $q'\in V$ such that $\gamma>\max q'(\beta)$ for all $\beta<\alpha$ and $p\forces_{\QQ_0}\check q'=\dot q'$. Now take $q$ to be the one-point extensions of each $q'(\beta)$ by adding $\gamma$ as a new maximal element.

To see that $D$ is $\kappa$-closed, note that if $\{\tup{p_\xi,\check q_\xi}\mid\xi<\eta\}$ for some $\eta<\kappa$ is a decreasing sequence of conditions in $D$. Let $\gamma$ be the $\sup\dom p_\xi$, which is also $\sup\bigcup\{q_\xi(0)\mid\xi<\eta\}$ (we can replace $0$ by any $\beta<\alpha$ by the requirement on the maximal elements of the conditions in $q$), then taking $p=\bigcup\{p_\xi\mid\xi<\eta\}\cup\{\tup{\gamma,\alpha}\}$ is a lower bound to all the $p_\xi$'s, and taking $q(\beta)=\bigcup\{q_\xi(\beta)\mid\xi<\eta\}\cup\{\gamma\}$ is a closed subset of each $S_\beta$ for $\beta<\alpha$ and a lower bound to all the $q_\xi$'s, and $\tup{p,\check q}\in D$.
\end{proof}
\begin{corollary}
$\PP$ has dense subset which is $\mu$-closed.\qed
\end{corollary}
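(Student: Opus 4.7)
The plan is to take $D:=\bigcup_{\alpha<\mu}D_\alpha$ as a subset of $\PP$, where $D_\alpha$ is the $\kappa$-closed dense subset of $\QQ_0\ast\dot\QQ_1\restriction\alpha$ from the previous lemma, embedded into $\PP$ by padding the support beyond $\alpha$ with maximal (empty) conditions. Density of $D$ in $\PP$ is immediate: by the remark above, any $\tup{p',\dot q'}\in\PP$ can first be extended to one of the form $\tup{p'',\check q''}$ with $q''\in V$; since $\QQ_1$ has ${<}\mu$-support and $\mu$ is regular in $V\models\ZFC$, the support $\supp q''$ is bounded in $\mu$ by some $\alpha<\mu$, so $\tup{p'',\check q''}$ lies in $\QQ_0\ast\dot\QQ_1\restriction\alpha$ and can be extended into $D_\alpha\subseteq D$.

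For $\mu$-closedness, I would take a decreasing chain $\tup{\tup{p_\xi,\check q_\xi}\midd\xi<\eta}$ in $D$ with $\eta<\mu$, each member lying in some $D_{\alpha_\xi}$ with witness $\gamma_\xi$. Since $\dom p_\xi=\gamma_\xi+1$ and the chain is decreasing, $\tup{\gamma_\xi\midd\xi<\eta}$ is increasing; regularity of $\kappa$ combined with $\eta<\mu\le\kappa$ gives $\gamma:=\sup_\xi\gamma_\xi<\kappa$, while regularity of $\mu$ gives $\alpha:=\sup_\xi\alpha_\xi<\mu$. The lower bound is then built just as in the previous lemma: set $p:=\bigcup_\xi p_\xi\cup\{\tup{\gamma,\alpha+1}\}$ and, for each $\beta<\alpha$, $q(\beta):=\bigcup_\xi q_\xi(\beta)\cup\{\gamma\}$, with the convention $q_\xi(\beta)=\varnothing$ when $\beta\ge\alpha_\xi$. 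Then $\dom p=\gamma+1$ and $p(\gamma)=\alpha+1>\alpha$; each $q(\beta)$ is closed bounded, with $\gamma$ its maximum obtained as the limit of the $\gamma_\xi$'s; and since $p$ forces $g(\gamma)=\alpha+1>\beta$ for every $\beta<\alpha$, one has $\gamma\in S_\beta$, so $q(\beta)\subseteq S_\beta$. Thus $\tup{p,\check q}\in D_\alpha\subseteq D$ and lower bounds the whole chain.

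The only point requiring care is the padding bookkeeping: a condition of $D_{\alpha_\xi}$, viewed inside $\QQ_0\ast\dot\QQ_1\restriction\alpha$ for $\alpha>\alpha_\xi$, is obtained by extending its $\QQ_1$-component by the maximal (empty) condition on the coordinates in $[\alpha_\xi,\alpha)$; this preserves the ordering, so the chain is honestly decreasing inside $\QQ_0\ast\dot\QQ_1\restriction\alpha$ and the gluing construction above produces a valid lower bound there. Once this is set up, everything is essentially word-for-word the argument of the previous lemma, and there is no genuine obstacle.
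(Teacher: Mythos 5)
Your proposal is correct and is exactly the argument the paper intends: the corollary is stated with no proof because it follows from the preceding lemma by observing that every condition of $\PP$ can be reduced to one whose $\QQ_1$-part lies in $V$ with support bounded below $\mu$, so the union of the dense sets $D_\alpha$ is dense, and a descending chain of length $<\mu$ has its supports bounded by some $\alpha<\mu$ (regularity of $\mu$), after which the gluing from the lemma applies. Your bookkeeping with the padding and the choice $p(\gamma)=\alpha+1$ is a correct rendering of that gluing.
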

Let $G$ be a $V$-generic filter for $\PP$ and let $M=\HS^G$. We will now prove that $M$ satisfies our desires. Namely, the $(\kappa,\mu)$-Fodor property fails, while $\DC_{<\mu}$ holds and for regular $\lambda\neq\kappa$ the $(\lambda,\lambda)$-Fodor property holds.
\begin{theorem}\label{thm:main thm for one}
In $M$ for every $\alpha<\mu$, $S_\alpha$ contains a club, but the intersection $\bigcap_{\alpha<\mu} S_\alpha=\varnothing$. Therefore in $M$ the $(\kappa,\mu)$-Fodor property fails; $\DC_{<\mu}$ holds; and for every regular cardinal $\lambda$, if $\lambda\neq\kappa$, then the $(\lambda,\lambda)$-Fodor property holds.
\end{theorem}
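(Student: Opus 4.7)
The plan is to verify the four assertions in turn, using the symmetry setup together with \autoref{lemma:approximation} and the preceding $\mu$-closed-dense-subset corollary as the main tools.

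First I check that the sequence $\langle S_\alpha\mid\alpha<\mu\rangle$ lies in $\HS$ and that each $S_\alpha$ contains a club in $M$. Since $\dot S_\alpha$ is definable from $\dot g$ alone and the automorphisms in $\sG$ act trivially on the $\QQ_0$-coordinate, every $\sigma\in\sG$ fixes $\dot S_\alpha$; the same applies to the whole sequence, so both are hereditarily symmetric. The canonical name $\dot C_\alpha$ for the generic club shot through $S_\alpha$ by the coordinate $\QQ_{1,\alpha}$ has symmetry group containing $\fix(\{\alpha\})\in\sF$, so $C_\alpha\in M$. From $S_\alpha=g^{-1}([\alpha,\mu))$ we get $\bigcap_{\alpha<\mu}S_\alpha=g^{-1}(\varnothing)=\varnothing$. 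Hence the sequence $\langle S_\alpha\mid\alpha<\mu\rangle$ in $M$ witnesses that the club filter of $\kappa$ is not $\mu^+$-complete, and the earlier characterisation of $(\kappa,\mu)$-Fodor via filter completeness gives its failure in $M$.

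For $\DC_{<\mu}$, I would first note that $\sF$ is $\mu$-complete: for any $\eta<\mu$ and family $\{E_i\mid i<\eta\}\subseteq[\mu]^{<\mu}$, $\bigcap_{i<\eta}\fix(E_i)=\fix\bigl(\bigcup_{i<\eta}E_i\bigr)$, and regularity of $\mu$ makes the union again lie in $[\mu]^{<\mu}$. Combined with the $\mu$-closed dense subset of $\PP$, the standard symmetric-extension argument from \cite{Karagila:2016} yields $\DC_{<\mu}$ in $M$: given an $\eta$-closed tree of height $\leq\eta<\mu$ with no maximal nodes, one recursively constructs a cofinal branch, using $\mu$-closure at each step and $\mu$-completeness of $\sF$ to keep the partial branch hereditarily symmetric.

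The remaining point is $(\lambda,\lambda)$-Fodor at regular $\lambda\neq\kappa$. Given $\langle T_\alpha\mid\alpha<\lambda\rangle\in M$ in which each $T_\alpha$ contains a club of $\lambda$, code this sequence as a set of ordinals; by \autoref{lemma:approximation} its name can be taken in $\QQ_0\ast\dot\QQ_1\restriction\beta$ for some $\beta<\mu$. This two-step forcing is $\kappa$-closed and, by $\GCH$, has size $\kappa$; write $V[G_\beta]$ for its $\ZFC$ generic extension. When $\lambda<\kappa$, $\kappa$-closedness adds no subsets of $\lambda$, and \autoref{lemma:approximation} applied to any $X\subseteq\lambda$ in $M$ puts $X$ in some $V[G_\beta]$, hence in $V$; so $\power(\lambda)^M=\power(\lambda)^V$, clubs at $\lambda$ coincide between $V$ and $M$, and the Fodor property is inherited from $V$. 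When $\lambda>\kappa$, $|\PP|=\kappa<\lambda$ so $\PP$ has $\lambda$-c.c., whence every club of $\lambda$ in $V[G]$ contains a $V$-club. Therefore if $T_\alpha$ contains a club of $M$, the complement $\lambda\setminus T_\alpha\in V[G_\beta]$ is disjoint from that club and hence from the $V$-club it contains, so $T_\alpha$ contains a club already in $V[G_\beta]$. The ZFC $(\lambda,\lambda)$-Fodor property in $V[G_\beta]$ now delivers a diagonal-intersection club sitting in $V[G_\beta]\subseteq M$. The main obstacle is this $\lambda>\kappa$ case: arranging the downward absoluteness of ``contains a club'' from $M$ to the intermediate model $V[G_\beta]$, which is handled via the $\lambda$-c.c.\ of $\PP$ granted by the $\GCH$-based size bound.
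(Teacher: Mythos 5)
Your argument for the failure of the $(\kappa,\mu)$-Fodor property only covers the case $\mu<\kappa$. When $\mu=\kappa$ (which the setup explicitly allows), the characterisation you invoke identifies the $(\kappa,\kappa)$-Fodor property with \emph{normality} of the club filter, i.e.\ closure under diagonal intersections of $\kappa$-sequences, not with $\kappa^+$-completeness; and the fact that $\bigcap_{\alpha<\kappa}S_\alpha=\varnothing$ while each $S_\alpha$ contains a club is a $\ZFC$ phenomenon (take $S_\alpha=(\alpha,\kappa)$) and so witnesses nothing. What must be shown is that the diagonal intersection $\mathop{\triangle}_{\alpha<\kappa}S_\alpha$ contains no club in $M$ (while being stationary there). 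The paper does this by noting that, by \autoref{lemma:approximation}, any club of $M$ lying inside the diagonal intersection would already be added by $\QQ_0\ast\dot\QQ_1\restriction\alpha$ for some $\alpha<\kappa$, and a club inside the diagonal intersection yields, after discarding an initial segment, a club inside every $S_\beta$ --- including $\beta>\alpha$, whose club-shooting coordinate has not yet acted --- which is impossible. This step is absent from your write-up and is the entire content of the $\mu=\kappa$ case.

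The remaining parts essentially reproduce the paper's proof: each $S_\alpha$ contains a club via $\fix(\{\alpha\})$ and the intersection is empty by definition of $g$; $\DC_{<\mu}$ follows from the $\mu$-closed dense subset of $\PP$ together with $\mu$-completeness of $\sF$; the case $\lambda<\kappa$ follows from \autoref{lemma:approximation} plus $\kappa$-closure of the approximating forcings (your formulation, which concludes $\power(\lambda)^M=\power(\lambda)^V$ by approximating each set of ordinals of $M$ rather than all of $V[G]$, is in fact slightly more careful than the paper's phrasing); and the case $\lambda>\kappa$ uses the $\kappa^+$-c.c.\ of $\PP$ to pull clubs down to $V$. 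Your detour through the intermediate $\ZFC$ model $V[G_\beta]$ in that last case is equivalent to the paper's direct argument and works, since $V[G_\beta]\subseteq M$ and every club of $M$ contains a club of $V$.
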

\begin{proof}
Each $S_\alpha$ contains a club in $M$, since $E=\{\alpha\}$ witnesses that the generic club for $\QQ_{1,\alpha}$ is in $\HS$; and $\bigcap_{\alpha<\mu}S_\alpha=\varnothing$ by the definition of $S_\alpha$ for $\alpha<\mu$. In the case of $\mu=\kappa$, we get that $S_\kappa=\mathop{\triangle}_{\alpha<\kappa}S_\alpha$ is stationary, since it contains a club after forcing with $\PP$ and $\kappa$ is preserved. However, it is still true that in $M$, $S_\kappa$ does not contain a club, since such a club would have to be added by $\QQ_1\restriction\alpha$ for some $\alpha<\kappa$, and this is impossible as that would mean that for every $S_\beta$ for $\beta>\alpha$, we have added a club at that stage. So the club filter is not normal.

We get $\DC_{<\mu}$ by \cite[Lemma~2.1]{Karagila:2014}, since $\PP$ is $\mu$-closed, and since $\sF$ is $\mu$-complete.

If $\lambda<\kappa$, then by the fact every set of ordinals was introduced by a $\kappa$-closed subforcing, we did not add any new subsets to $\lambda$ between $V$ and $V[G]$, and by the fact that $M$ lies between the two models $\power(\lambda)$ is computed the same in $V,M$ and $V[G]$. Therefore the conclusion holds. If $\lambda>\kappa$, then by the fact that $\PP$ satisfies $\kappa^+$-c.c.\ it follows that if $\tup{\dot T_\alpha\mid\alpha<\lambda}^\bullet\in\HS$ is a name for a sequence, and $p$ is a condition such that for every $\alpha<\lambda$ there is some $\dot C_\alpha\in\HS$ such that $p\forces\dot C_\alpha\subseteq\dot T_\alpha\text{ is a club}$, then there is a club $D_\alpha\in V$ such that $p\forces\check D_\alpha\subseteq\dot C_\alpha$, which completes the proof.
\end{proof}
\begin{remark}
Note that in $V[G]$ we collapsed $\kappa$ to be $\mu$, in the case that $\mu<\kappa$, as the fact each $S_\alpha$ contains a club is upwards absolute as well as the fact their intersection is empty. This is somewhat common in choiceless constructions, where in the outer model cardinals were collapsed, but in the symmetric extensions we managed to keep them standing.
\end{remark}
\section{Failure, failure everywhere!}\label{sect:global}
Let $D$ be a class of regular cardinals and let $F$ be a function defined on $D$ such that $F(\kappa)$ is a regular cardinal $\leq\kappa$ for all $\kappa\in D$; and for $\lambda\notin D$, the $(\lambda,\lambda)$-Fodor property holds. We would like to have that $(<F(\kappa),\kappa)$-Fodor property holds while the $(F(\kappa),\kappa)$-Fodor property fails for $\kappa\in D$. 

In this section we outline two methods which can be used to construct such models: Easton products of symmetric systems, and symmetric iterations. The application of iterated symmetric extensions is presented here as a proof of concept, suggesting that the method can be applied to other constructions of similar flavor, where the first method is inapplicable (e.g.\ Gitik's model). We present the two methods, starting with a model $V$ satisfying $\ZFC+\GCH$. 

One minor problem that occur, however, is that both methods require that the localized forcing $\QQ_0\ast\dot\QQ_1$ outlined in the previous section is $\kappa$-closed. This is only true when $\mu=\kappa$, namely when we only destroy the normality of the club filter at $\kappa$. We can fix this problem when $\mu<\kappa$ by paying a small price of elegance: it can no longer be true that the intersection of sets containing clubs is empty. The modification is as follows: The generic function $g\colon\kappa\to\mu$ of $\QQ_0$ is now into $\mu+1$, and the product of $\dot\QQ_1$ is now the ${<}\kappa$-support product of $\club(S_\alpha)$ for $\alpha<\mu$. Namely, we shoot clubs into all the stationary sets, except the last one. The automorphism group is now taken to be the ${<}\kappa$-support product instead of the ${<}\mu$-support product; but the filter of subgroups remains the same.

The proof of \autoref{lemma:approximation} needs no modifications, and therefore the intersection of the generic clubs does not enter the symmetric model, so indeed the intersection of the $S_\alpha$'s does not contain a club. However it is nevertheless stationary as it includes $S_\mu$, the last stationary set. What we do gain here is that the iteration is now $\kappa$-closed. So we can now obtain the global results.

\subsection{Easton product of symmetric systems}
We give a rough outline of the construction and the arguments, a more detailed account can be found in \cite[\S4]{Karagila:2014}.  

For $\kappa\in D$ let $\tup{\PP_\kappa,\sG_\kappa,\sF_\kappa}$ be the symmetric system from the previous section for $F(\kappa)=\kappa$, or the modified version for $F(\kappa)<\kappa$, with $F(\kappa)$ taking the role of $\mu$. We define $\PP$ to be the Easton support product of the $\PP_\kappa$, with $\sG$ defined as the Easton support product of the $\sG_\kappa$, and $\sF$ the Easton support product of the $\sF_\kappa$. Namely $\pi\in\sG$ is a function with a domain which is a set $A\subseteq D$, such that $\pi(\kappa)\in\sG_\kappa$, and the action of $\pi$ on $\PP$ is defined pointwise. We say that $\vec H\in\sF$ if $\vec H$ is a function whose domain is a subset $A\subseteq D$ and $\vec H(\kappa)\in\sF_\kappa$ for all $\kappa\in A$, and we say that that $\pi$ lies in $\vec H$ if for every $\kappa\in\dom\vec H$, $\pi(\kappa)\in \vec H(\kappa)$.

We define $\HS$ as before. Each $\PP_\kappa$ is $\kappa$-closed, therefore the Easton product preserves $\ZFC$ in the outer model, thus we can appeal to the standard arguments that $\HS^G$ is a transitive class, closed under G\"odel operations and almost universal, so it is a model of $\ZF$. By arguments similar to \cite[\S4]{Karagila:2014}, with the proofs in the previous section it follows that $(\kappa,F(\kappa))$-Fodor fails for all $\kappa\in D$ while the $(\kappa,\kappa)$-Fodor property holds, for all $\kappa\notin D$. Moreover $\DC_{<\mu}$ holds, where $\mu=\min\rng F$.
\subsection{Symmetric iterations}
The method of symmetric iterations was developed in \cite{Karagila:2016}. The ideas behind this method, however, relies heavily on the use of a finite support iteration---or in this case, a finite support product. If we take a class product with finite support we are bound to add a proper class of Cohen reals, and we are likely to collapse all the cardinals to be countable. This deprives us of the arguments based on G\"odel operations---as used in the previous part---that symmetric extensions satisfy $\ZF$, as these arguments rely heavily on $\ZF$ holding in the outer model. In constructions such as Gitik's in \cite{Gitik:1980} or Fernengel--Koepke's recent work in \cite{Fernengel-Koepke:2016} the authors argue that $\ZF$ holds in the resulting model by verifying the axioms directly from the construction of the model, an approach for which your miles may vary depending on the complexity of the construction.

Meanwhile, the technique of iterating symmetric extensions offers us a simpler way, both in terms of thinking about these constructions as iterated constructions, where we solve a few problems at a time, as well in the existence of a preservation theorem for particular cases which include some class length iterations and products. The following fact is the application of the preservation theorem to our case. The details can be found in \cite[\S9]{Karagila:2016}.
\begin{fact}
Suppose that $\tup{\QQ_\alpha,\sG_\alpha,\sF_\alpha\mid\alpha\in\Ord}$ is a class of symmetric systems, let $\PP_\alpha$ denote the finite support product of these systems for $\beta<\alpha$ and $\HS_\alpha$ denote the hereditarily symmetric names for these products, finally assume that $G$ is a $V$-generic for $\PP_{\Ord}=\PP$. If for all $\alpha$, $\sG_\alpha$ witnesses the homogeneity of $\QQ_\alpha$, and for all $\eta$ there is some $\rho(\eta)$ such that for all $\alpha\geq \rho(\eta)$, $V[G]_\eta\cap(\HS_\alpha)^G=V[G]_\eta\cap(\HS_{\alpha+1})^G$, then $\HS^G$ satisfies $\ZF$.\footnote{This implies that we somehow require $\rho$ to be definable in $V$, but this is a consequence of the fact that not adding sets of rank $\eta$ is something definable from $\eta$.}
\end{fact}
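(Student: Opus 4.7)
The plan is to verify that $M := \HS^G$ satisfies $\ZF$ via the classical G\"odel criterion: a transitive class $M$ with $\Ord \subseteq M$ that is closed under the finitely many G\"odel operations and is \emph{almost universal} -- meaning every $V[G]$-set $x \subseteq M$ is contained in some $y \in M$ -- must satisfy $\ZF$. Three of these four conditions are handled by the standard symmetric-extension machinery, reinterpreted for the class-length product, and go through without genuinely new content: transitivity is immediate from hereditary symmetry, the inclusion $V \subseteq M$ follows because the canonical name $\check z$ has full symmetry group $\sG$, and closure under G\"odel operations is the usual term-by-term construction of a $\PP$-name whose symmetry group contains the intersection of the symmetry groups of its inputs, which lies in $\sF$ since $\sF$ is a filter.

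The entire content is in almost universality, and this is precisely where hypothesis (2) does its work. Given $x \in V[G]$ with $x \subseteq M$, I would pick $\eta$ with $x \subseteq V[G]_\eta$, and then take $\alpha \geq \rho(\eta)$ supplied by (2). Chaining (2) up through all $\alpha' \geq \alpha$ yields
\[x \;\subseteq\; M \cap V[G]_\eta \;=\; (\HS_\alpha)^G \cap V[G]_\eta.\]
Now $N_\alpha := (\HS_\alpha)^G$ is an ordinary \emph{set-length} symmetric extension of $V$, so by the standard theorem quoted in the preliminaries it satisfies $\ZF$; consequently $(N_\alpha)_\eta := N_\alpha \cap V[G]_\eta$ is a set in $N_\alpha$, realised by some $\dot A \in \HS_\alpha$. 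I would then view $\dot A$ as a $\PP$-name via the canonical embedding $\PP_\alpha \hookrightarrow \PP$ and verify that $\dot A \in \HS$: because $\dot A$ mentions only coordinates below $\alpha$, every automorphism of $\PP$ whose restriction to $\PP_\alpha$ lies in $\sym_{\sG_\alpha}(\dot A)$ fixes $\dot A$, and the preimage of $\sym_{\sG_\alpha}(\dot A)$ under projection to $\sG_\alpha$ belongs to the product filter $\sF$ by its very construction. Hence $\dot A \in \HS$ and $x \subseteq \dot A^G \in M$, as required.

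The principal difficulty is exactly this almost-universality step: in a class-length construction $V[G]$ need not satisfy $\ZF$, so we cannot directly invoke replacement or power set in $V[G]$ to produce a bounding set. Hypothesis (2) is tailored to sidestep this, letting us realise each rank-initial segment of $M$ inside a set-length symmetric extension where standard $\ZF$-preservation is available. The homogeneity hypothesis enters behind the scenes, ensuring that the $(\HS_\alpha)^G$ are well-behaved symmetric extensions and that compatibility between names at different stages is preserved by the pointwise action of the product group. The remaining work is bookkeeping on the finite-support product, essentially the lift of a set-length hereditarily symmetric name to one over $\PP$, which is clean precisely because automorphisms trivial on $\PP_\alpha$ act trivially on $\PP_\alpha$-names.
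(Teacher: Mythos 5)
There is a genuine gap, and it is exactly the one the paper flags in the paragraph immediately preceding this Fact. Your entire argument is routed through the G\"odel criterion (transitive, contains the ordinals, closed under G\"odel operations, almost universal, hence a model of $\ZF$), but that criterion is a theorem \emph{about classes inside a $\ZF$ universe}: its proof uses Separation, Replacement and Power Set in the ambient model, e.g.\ to see that $\power(x)\cap M$ or the image of a set under a definable function is a set at all. Here the ambient model is $V[G]$ for a class-length finite-support product, which adds a proper class of Cohen reals; consequently $\power(\omega)^{V[G]}$ is a proper class, $V[G]\not\models\ZF$, and for class forcing even Separation in $V[G]$ and the definability of the forcing relation are not free. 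This is precisely why the paper says the construction ``deprives us of the arguments based on G\"odel operations\dots\ as these arguments rely heavily on $\ZF$ holding in the outer model,'' and why it instead appeals to the preservation theorem of \cite[\S9]{Karagila:2016}, whose proof verifies the axioms of $\HS^G$ directly; there the homogeneity hypothesis does real work (making statements about $\HS_\alpha$-names decidable by conditions supported on the relevant coordinates) rather than sitting ``behind the scenes.''

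Your instinct about where the stabilization hypothesis enters is sound: it is indeed used to trap $\HS^G\cap V[G]_\eta$ inside the set-length symmetric extension $(\HS_{\rho(\eta)})^G$, where $\ZF$ holds and rank-initial segments are sets, and this is how Power Set and Replacement for $\HS^G$ are ultimately obtained. But two repairs are needed. First, that trapping must be used to verify the axioms in $\HS^G$ directly rather than via almost universality in $V[G]$. Second, your ``chaining'' of the hypothesis only covers successor steps, whereas under finite support a name in $\HS_\lambda$ for limit $\lambda$ may involve coordinates unbounded in $\lambda$, so stabilization at limit stages requires a separate argument (again via homogeneity, in the spirit of \autoref{lemma:approximation}) and is not a consequence of the successor-step hypothesis alone.
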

In other words, under the assumption of homogeneity if we can guarantee for every $\eta$, that at some point no new sets of rank $<\eta$ will be added, then the final model will satisfy $\ZF$. 
\medskip

Our current predicament is not as straightforward as to apply this fact directly. While the forcing $\QQ_0\ast\dot\QQ_1$ described in the previous section is weakly homogeneous, the group of automorphisms we used does not witness that, as no conditions in $\QQ_0$ are moved by these automorphisms.

We could use automorphisms that move conditions in $\QQ_0$ as well, but we will need to ensure that the name for $\QQ_1$ is preserved. Another option would be to use a slightly different forcing for $\QQ_1$ which is more amenable to automorphisms of $\QQ_0$. A third option, which we will pursue for the rest of the section is to force with an Easton product that adds all the Cohen subsets for the $\QQ_0$'s, and in that model take the finite support product of the $\QQ_1$'s.

Let $W$ be the model obtained by forcing with the Easton product of $\QQ_{\kappa,0}$, adding a Cohen function $g_\kappa\colon\kappa\to F(\kappa)$ over $V$ for $\kappa\in D$.

In $W$ define $\QQ_\kappa$ to be $\dot\QQ_{\kappa,1}^{g_\kappa}$, the forcing as interpreted in $V[g_\kappa]$, for any $\alpha\notin D$ we define $\QQ_\alpha$ as the trivial forcing. This gives us, in fact, that the symmetric system of $\PP_\kappa$ from the previous section has a natural interpretation in this model, as the automorphisms were really just defined on $\QQ_{\kappa,1}$, and it is a weakly homogeneous system.
\begin{proposition}
For every $\kappa\in D$, $W$ satisfies that $\QQ_\kappa$ is a $\kappa$-distributive forcing.
\end{proposition}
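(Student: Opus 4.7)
The plan is to reduce to Lemma~\ref{lemma:yair product} via a factoring of the Easton product $\mathbb{E}=\prod_{\lambda\in D}\QQ_{\lambda,0}$. Write $\mathbb{E}=\mathbb{E}_{<\kappa}\times\mathbb{E}_{\geq\kappa}$ for the Easton sub-products indexed by $D\cap\kappa$ and $D\setminus\kappa$ respectively. Since $\dot\QQ_\kappa$ depends only on the $\kappa$-th coordinate $g_\kappa$, rearranging iteration and product gives $\mathbb{E}_{\geq\kappa}*\dot\QQ_\kappa\cong(\QQ_{\kappa,0}*\dot\QQ_{\kappa,1})\times\mathbb{E}_{>\kappa}$. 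By the analysis in Section~\ref{sect:local} the first factor has a $\kappa$-closed dense subset (this is exactly what the modification for $F(\kappa)<\kappa$ secures), and $\mathbb{E}_{>\kappa}$, being an Easton product of $\kappa^+$-closed forcings, is itself $\kappa$-closed. Hence $\mathbb{E}_{\geq\kappa}*\dot\QQ_\kappa$ has a $\kappa$-closed dense subset in $V$, and in particular is $\kappa$-distributive over $V$.

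On the other side, $\mathbb{E}_{<\kappa}$ is a standard Easton product and so $\kappa$-c.c.\ in $V$ under the ambient GCH. A short recursion shows that any forcing with a $\kappa$-closed dense subset preserves the $\kappa$-c.c.\ of a ground-model notion of forcing: if $\dot A$ were a name for a new antichain of size $\kappa$ in $\check{\mathbb{E}}_{<\kappa}$, one would use $\kappa$-closedness to build a descending chain of conditions deciding $\kappa$ pairwise incompatible elements of $\dot A$, contradicting the ground-model $\kappa$-c.c.\ (incompatibility is absolute). Thus $\mathbb{E}_{\geq\kappa}*\dot\QQ_\kappa$ preserves the $\kappa$-c.c.\ of $\mathbb{E}_{<\kappa}$, and all hypotheses of Lemma~\ref{lemma:yair product} hold with $\PP=\mathbb{E}_{<\kappa}$ and $\QQ=\mathbb{E}_{\geq\kappa}*\dot\QQ_\kappa$. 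We conclude that $\mathbb{E}_{<\kappa}$ forces $\mathbb{E}_{\geq\kappa}*\dot\QQ_\kappa$ to be $\kappa$-distributive.

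To translate this back to $W$: given a $\QQ_\kappa$-generic $H$ over $W=V[\vec g]$, the model $W[H]$ is precisely the extension of $V[\vec g_{<\kappa}]$ by a generic for $\mathbb{E}_{\geq\kappa}*\dot\QQ_\kappa$, so by the previous paragraph no $<\kappa$-sequence of ordinals of $W[H]$ lies outside $V[\vec g_{<\kappa}]$. Since $V[\vec g_{<\kappa}]\subseteq W\subseteq W[H]$, the models $W$ and $W[H]$ agree on $<\kappa$-sequences of ordinals, i.e., $\QQ_\kappa$ is $\kappa$-distributive in $W$. The main obstacle is really just the bookkeeping of the factorization---checking that the $\kappa$-closed dense subset built in Section~\ref{sect:local} genuinely survives the rearrangement into $(\QQ_{\kappa,0}*\dot\QQ_{\kappa,1})\times\mathbb{E}_{>\kappa}$, and that standard Easton facts actually deliver the $\kappa$-c.c.\ of $\mathbb{E}_{<\kappa}$ in the required uniform way---after which the conclusion falls out of Lemma~\ref{lemma:yair product}.
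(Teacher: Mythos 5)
Your decomposition is essentially the one the paper uses: split the Easton product at $\kappa$, observe that $\QQ_{\kappa,0}\ast\dot\QQ_{\kappa,1}$ has a $\kappa$-closed dense subset (which is exactly what the modification at the start of Section~\ref{sect:global} buys), fold the tail above $\kappa$ into the closed side, and play the closed side against the small side via mutual genericity. You are in fact more explicit than the paper, which never names a chain condition in this proof and simply asserts that $V[H_0\times H_1]$ and $W'[H_1]$ have the same $\gamma$-sequences; routing the argument through \autoref{lemma:yair product} is the honest way to justify that assertion, and your chain-condition preservation argument and the final translation back to $W$ are correct.

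The one load-bearing claim that does not survive scrutiny is that $\mathbb{E}_{<\kappa}$ is $\kappa$-c.c.\ ``by standard Easton facts under GCH''. This is fine when $\kappa$ is inaccessible (a $\Delta$-system argument) or the successor of a regular cardinal (then $|\mathbb{E}_{<\kappa}|<\kappa$), but it fails when $\kappa=\nu^+$ for $\nu$ singular with $D$ cofinal in $\nu$. For example, take $\kappa=\aleph_{\omega+1}$ and $D\cap\kappa=\{\aleph_n\mid 1\leq n<\omega\}$: the Easton support condition is vacuous on this index set, so $\mathbb{E}_{<\kappa}$ is the full-support product, and the conditions whose $n$-th coordinate is a total function $\aleph_{n-1}\to 2$ form an antichain of size $\prod_n 2^{\aleph_{n-1}}=\aleph_\omega^{\aleph_0}=\kappa$ under $\GCH$. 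So \autoref{lemma:yair product} cannot be applied at $\kappa$ in one shot. The repair is the standard one from Easton's theorem: to rule out a new sequence of a fixed regular length $\gamma<\kappa$, factor instead at a regular $\theta$ with $\gamma<\theta\leq\kappa$ and $|\mathbb{E}_{<\theta}|<\theta$ (e.g.\ $\theta=|\gamma|^{++}$), put $\mathbb{E}_{[\theta,\kappa)}$, which is $\theta$-closed, on the closed side together with $(\QQ_{\kappa,0}\ast\dot\QQ_{\kappa,1})\times\mathbb{E}_{>\kappa}$, and apply the lemma at $\theta$; sequences of singular length then reduce to these. To be fair, the paper's own proof glosses over exactly the same point, but since your write-up singles out the $\kappa$-c.c.\ of $\mathbb{E}_{<\kappa}$ as the thing to check, you should know that at successors of singulars it is genuinely false as stated and the factorization has to move below $\kappa$.
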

\begin{proof}
To get from $V$ to $W$ we took an Easton support product of increasingly more closed forcings. Therefore it is enough to work over $W'=V[\tup{g_\mu\mid\mu\in D\cap\kappa+1}]$. Now, we argue that in fact the definition of $\QQ_\kappa$ as $\dot\QQ_{1,\kappa}^{g_\kappa}$, we get that forcing with $\QQ_\kappa$ over $W'$ is the same as forcing over $V$ with the Easton support product of $\left(\prod_{\mu<\kappa}\QQ_{0,\mu}\right)\times\PP_\kappa$. We proved that $\PP_\kappa$ is $\kappa$-closed. So if $[H_0\times H_1]$ is a $V$-generic filter for $\prod_{\mu<\kappa}\QQ_\mu\times\PP_\kappa$, such that $V[H_0]=W'$, we get that $V[H_0\times H_1]=W'[H-1]$ and both have the same $\gamma$-sequences for all $\gamma<\kappa$.
\end{proof}

Let $\RR_\alpha$ denote the finite-support product $\prod_{\beta<\alpha}\QQ_\beta$. Define $\cG_\alpha$ to be the finite-support product $\prod_{\beta<\alpha}\sG_\beta$ then $\cG_\alpha$ acts on $\RR_\alpha$ naturally. Similarly, let $\cF_\alpha$ be the normal filter of subgroups on $\cG_\alpha$ generated by the finite-support product of the filters $\sF_\beta$ for $\beta<\alpha$. Note that if $\alpha$ is a successor cardinal, then $|\RR_\alpha|<\alpha$, and therefore has $\alpha$-c.c., for an inaccessible cardinal $\alpha$ by a $\Delta$-system argument we get that $\RR_\alpha$ will also have the $\alpha$-.c.c., and in both case this property is preserved after forcing over $W$ with $\QQ_\alpha$. Therefore the conditions of the \autoref{lemma:yair product} hold, and so forcing with $\RR_\alpha$ over $W$ will preserve the distributivity of $\QQ_\alpha$.

We shall denote by $\HS_\alpha$ the hereditarily $\cF_\alpha$-symmetric $\RR_\alpha$-names. So in order to use the above fact on symmetric iterations, it is enough to prove the pointwise distributivity condition as stated in the fact. Define $\rho(\eta)$ to be the least $\kappa\in D$ such that $|W_\eta|<\kappa$. We prove by induction that for $\alpha\geq\rho(\eta)$, if $\dot x\in\HS_{\alpha+1}$ and $\dot x$ has rank $<\eta$, then $\dot x$ is essentially a name in $\HS_\alpha$. 

Suppose that $\dot x\in\HS_{\alpha+1}$ and $\dot x$ has rank $<\eta$. By the fact that $\RR_\alpha$ does not change the distributivity of $\QQ_\alpha$, and $\forces_{\RR_{\alpha+1}}|\dot x|<\check\alpha$, we get that there is a dense set of conditions $r$ such that for some $\dot x_r\in\HS_\alpha$, $r\forces_{\RR_{\alpha+1}}\dot x=\dot x_r$. We can find such $r$ such that $r(\alpha)$ is fixed by a $H$ group in $\sF_\alpha$. It follows that $\dot x_r$ is preserved by all the automorphisms in $\sym(\dot x)\cap H$. So $\dot x_r\in\HS_{\alpha+1}$. However, by the fact that $\dot x_r$ is actually an $\RR_\alpha$-name, any part of the automorphisms coming from $\sG_\alpha$ is irrelevant, so in fact $\dot x_r\in\HS_\alpha$ as wanted.

\section{Flatly stated consistency results}
From this work we can conclude the following statements are consistent with $\ZF+$``every successor cardinal is regular'', relative to the consistency of $\ZFC$. 
\begin{enumerate}
\item For every regular cardinal $\kappa$, the club filter of $\kappa$ is $\sigma$-incomplete.
\item For every regular cardinal $\kappa$, the club filter of $\kappa$ is $\kappa$-complete, but not normal. Therefore there are no normal uniform filters on any infinite ordinal.
\item For every regular cardinal $\kappa$, there is a $\sigma$-incomplete filter on $\kappa$ which has a $\kappa$-complete filter base.
\end{enumerate}
\section*{Acknowledgments}
The author would like to thank his advisor Menachem Magidor for his patient help and useful suggestions; to Yair Hayut for his continuous feedback, suggestions and general ideas for improvements; and Assaf Rinot for reading some early drafts of this manuscript and voicing some much needed critique about clarity of statements and expositions.
\bibliographystyle{amsplain}

\begin{thebibliography}{1}

\bibitem{Bilinsky-Gitik:2012}
Eilon Bilinsky and Moti Gitik, \emph{A model with a measurable which does not
  carry a normal measure}, Arch. Math. Logic \textbf{51} (2012), no.~7-8,
  863--876. \MR{2975433}

\bibitem{Fernengel-Koepke:2016}
Anne Fernengel and Peter Koepke, \emph{An {E}aston-like theorem for
  {Z}ermelo-{F}raenkel set theory without choice}, ArXiv e-prints
  \textbf{1607.00205} (2016), 1--57.

\bibitem{Gitik:1980}
M.~Gitik, \emph{All uncountable cardinals can be singular}, Israel J. Math.
  \textbf{35} (1980), no.~1-2, 61--88. \MR{576462}

\bibitem{Grigorieff:1975}
Serge Grigorieff, \emph{Intermediate submodels and generic extensions in set
  theory}, Ann. Math. (2) \textbf{101} (1975), 447--490. \MR{0373889}

\bibitem{Jech:AC1973}
Thomas~J. Jech, \emph{The axiom of choice}, North-Holland Publishing Co.,
  Amsterdam-London; Amercan Elsevier Publishing Co., Inc., New York, 1973,
  Studies in Logic and the Foundations of Mathematics, Vol. 75. \MR{0396271}

\bibitem{Karagila:2014}
Asaf Karagila, \emph{Embedding orders into the cardinals with
  {$\mathsf{DC}_\kappa$}}, Fund. Math. \textbf{226} (2014), no.~2, 143--156.
  \MR{3224118}

\bibitem{Karagila:2016}
\bysame, \emph{Iterating symmetric extensions}, ArXiv e-prints
  \textbf{1606.06718} (2016), 1--32, submitted.

\end{thebibliography}
\providecommand{\bysame}{\leavevmode\hbox to3em{\hrulefill}\thinspace}
\providecommand{\MR}{\relax\ifhmode\unskip\space\fi MR }
% \MRhref is called by the amsart/book/proc definition of \MR.
\providecommand{\MRhref}[2]{%
  \href{http://www.ams.org/mathscinet-getitem?mr=#1}{#2}
}
\providecommand{\href}[2]{#2}

\end{document}